\newtheorem{theorem}{Theorem}[section]
\newtheorem{lemma}{Lemma}[section]
\newtheorem{cor}{Corollary}[section]
\numberwithin{equation}{section}
\theoremstyle{definition}
\theoremstyle{remark}
\begin{document}
\title{On lower and upper bounds of matrices}
\author{Peng Gao}
\address{Division of Mathematical Sciences, School of Physical and Mathematical Sciences,
Nanyang Technological University, 637371 Singapore}
\email{penggao@ntu.edu.sg}
\date{June 16, 2009.}
\subjclass[2000]{Primary 47A30} \keywords{Lower and upper bounds
of matrices}


\begin{abstract}
  Using an approach of Bergh, we give an alternate proof of Bennett's result on lower bounds for
non-negative matrices acting on non-increasing non-negative
sequences in $l^p$ when $p \geq 1$ and its dual version, the upper
bounds when $0<p \leq 1$. We also determine such bounds explicitly
for some families of matrices.
\end{abstract}

\maketitle
\section{Introduction}
\label{sec 1} \setcounter{equation}{0}

  Let $p>0$ and $l^p$ be the space of all complex sequences ${\bf a}=(a_n)_{n \geq 1}$ satisfying:
\begin{equation*}
  ||{\bf a}||_p=\Big(\sum^{\infty}_{i=1}|a_i|^p \Big )^{1/p}<\infty.
\end{equation*}
  When $p>1$, the celebrated
   Hardy's inequality \cite[Theorem 326]{HLP} asserts that for
   any ${\bf a} \in l^p$,
\begin{equation}
\label{eq:1} \sum^{\infty}_{n=1}\big{|}\frac {1}{n}
\sum^n_{k=1}a_k\big{|}^p \leq \Big(\frac {p}{p-1}
\Big)^p\sum^\infty_{k=1}|a_k|^p.
\end{equation}

   Hardy's inequality can be interpreted as the $l^p$ operator norm of
   the Ces\`{a}ro matrix $C$, given by
$c_{j,k} = 1/j, k \leq j$, and $0$ otherwise, is bounded on $l^p$
and has norm $\leq p/(p -1)$ (The norm is in fact $p/(p - 1)$). It
is known that the Ces\`{a}ro operator is not bounded below, or the
converse of inequality \eqref{eq:1} does not hold for any positive
constant. However, if one assumes $C$ acting only on
non-increasing non-negative sequences in $l^p$, then such a lower
bound does exist, and this is first obtained by Lyons in \cite{L}
for the case of $l^2$ with the best possible constant. For the
general case concerning the lower bounds for an arbitrary
non-negative matrix acting on non-increasing non-negative
sequences in $l^p$ when $p \geq 1$, Bennett \cite{B} determined
the best possible constant. When $0<p \leq 1$, one can also
consider a dual question and this has been studied in \cite{B1},
\cite{Bergh} and \cite{BGE}. Let $A=(a_{j,k}), 1 \leq j \leq m, 1
\leq k \leq n$ with $a_{j,k} \geq 0$, we can summarize the main
results in this area in the following
\begin{theorem}[{\cite[Theorem 2]{B}, \cite[Theorem 4]{BGE}}]
\label{thm1}
  Let ${\bf x}=(x_1, \ldots, x_n), x_1 \geq \ldots \geq x_n \geq 0$, $p \geq 1$,
  $0<q \leq p$, then
\begin{equation}
\label{1}
  ||A{\bf x}||_q \geq \lambda ||{\bf x}||_p,
\end{equation}
   where
\begin{equation*}
   ||A{\bf
   x}||^q_q=\sum^m_{j=1}(\sum^n_{k=1}a_{j,k}x_k)^q
\end{equation*}
   and
\begin{equation}
\label{3}
   \lambda^q =\min_{1 \leq r \leq n}
   r^{-q/p}\sum^m_{j=1}(\sum^r_{k=1}a_{j,k})^q.
\end{equation}
   Inequality \eqref{1} is reversed when $0<p \leq 1$ and $q \geq p$ with $\min$ replaced by $\max$ in \eqref{3}. Moreover,
   there is equality in \eqref{1} if ${\bf x}$ has the form
  $x_k=x_1, 1 \leq k \leq s$ and $x_k=0, k >s$ where $s$ is any
  value of $r$ where the minimum or maximum in \eqref{3} occurs.
\end{theorem}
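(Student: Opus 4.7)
The plan is to follow Bergh's approach, which handles the case $p \geq 1$, $0 < q \leq p$ and its dual uniformly via a layer-cake representation of $\mathbf{x}$ combined with two applications of Minkowski's integral inequality (the second in its reverse form when the exponent drops below $1$). Since $\mathbf{x}$ is non-increasing, the level set $\{k:x_k>t\}$ is always an initial segment $\{1,\dots,r(t)\}$ with $r(t):=|\{k:x_k>t\}|$. Substituting the identity $x_k=\int_0^\infty \mathbf{1}[x_k>t]\,dt$ into the definitions yields the integral representations
\[
(A\mathbf{x})_j=\int_0^\infty S_j^{r(t)}\,dt,\qquad \|\mathbf{x}\|_p^p=p\int_0^\infty t^{p-1}r(t)\,dt,
\]
where $S_j^r:=\sum_{k=1}^r a_{j,k}$; notice that $t\mapsto S_j^{r(t)}$ is non-increasing in $t$, since $S_j^r$ is non-decreasing in $r$ and $r$ is non-increasing in $t$.

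The central analytic ingredient I would isolate as a lemma is the following scalar inequality: for every non-increasing non-negative $f$ on $[0,\infty)$ and every $p\geq 1$,
\[
\Bigl(\int_0^\infty f(t)\,dt\Bigr)^p \geq p\int_0^\infty t^{p-1}f(t)^p\,dt,
\]
with the reverse inequality in the range $0<p\leq 1$. I would prove this by expressing $f$ itself via layer cake as a superposition of indicators and applying Minkowski's integral inequality in $L^p$, after the substitution $u=t^p$ to produce the weight $t^{p-1}$.

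Applying this scalar inequality with $f(t)=S_j^{r(t)}$ gives $(A\mathbf{x})_j^p\geq \int_0^\infty F_j(t)\,dt$ where $F_j(t):=pt^{p-1}(S_j^{r(t)})^p$. Raising to the power $q/p\leq 1$ and summing over $j$, I then invoke Minkowski's integral inequality in its reverse form (valid for exponent $q/p\leq 1$ with non-negative integrands) to exchange the sum with the integral:
\[
\|A\mathbf{x}\|_q^q \geq \sum_j \Bigl(\int_0^\infty F_j\,dt\Bigr)^{q/p} \geq \Bigl(\int_0^\infty \bigl(\sum_j F_j(t)^{q/p}\bigr)^{p/q}\,dt\Bigr)^{q/p}.
\]
A direct computation simplifies the inner quantity to $pt^{p-1}\phi_{r(t)}^p$, where $\phi_r^q:=\sum_j (S_j^r)^q$; the defining property $\phi_r\geq\lambda r^{1/p}$ from \eqref{3} then delivers $\|A\mathbf{x}\|_q^q\geq (\lambda^p\|\mathbf{x}\|_p^p)^{q/p}=\lambda^q\|\mathbf{x}\|_p^q$, which is the claim.

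The dual case $0<p\leq 1$, $q\geq p$ follows by reversing every inequality throughout: the scalar key inequality flips, Minkowski's integral inequality is used in its standard direction (since now $q/p\geq 1$), and $\phi_r\leq\lambda r^{1/p}$ replaces the previous bound. The main obstacle will be a clean proof of the scalar inequality and careful bookkeeping of the two Minkowski applications to ensure the signs and exponents all work out. For the equality statement, equality in the scalar inequality forces $f(t)=S_j^{r(t)}$ to be a constant multiple of an indicator $\mathbf{1}_{[0,T]}(t)$, which pins $\mathbf{x}$ down to the form $T\cdot(1,\dots,1,0,\dots,0)$ with $s$ ones, for any $s$ realising the extremum in \eqref{3}.
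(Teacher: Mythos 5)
Your proof is correct, but it is not the route the paper takes. The paper's Section~\ref{sec 2} proof is variational: after reducing to strictly positive entries, it takes $\lambda_0=\min\{\|A{\bf x}\|_q : \|{\bf x}\|_p=1,\ x_1\geq\dots\geq x_n\geq 0\}$, uses compactness to produce a minimizer, and then shows via a first-derivative computation --- whose key step is Lemma~\ref{lem1}, a H\"older-type inequality for the sums $a_j=\sum_{k\le i}a_{j,k}({\bf x}_0)_k$ and $b_j=\sum_{k>i}a_{j,k}({\bf x}_0)_k$ --- that any admissible ${\bf x}_0$ not of the flat-initial-segment form can be perturbed to decrease the ratio, a contradiction; this is the Bergh-style argument the paper advertises. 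You instead use the layer-cake decomposition of a non-increasing sequence into indicators of initial segments together with two applications of Minkowski's integral inequality (the scalar inequality $(\int_0^\infty f)^p\geq p\int_0^\infty t^{p-1}f^p$ for non-increasing $f$, then the reverse vector form with exponent $q/p\leq 1$), which is essentially the Bennett--Grosse-Erdmann route. Your version buys a fully symmetric treatment of the two cases (each inequality simply flips when the exponent crosses $1$), avoids the compactness and limiting arguments, and reduces directly to the extremal vectors $\phi_r\geq\lambda r^{1/p}$; the paper's version needs no integral machinery and localizes all the work in one elementary H\"older application. Two small remarks: your scalar lemma also follows in one line from $\frac{d}{dt}F(t)^p=pF(t)^{p-1}f(t)\geq p(tf(t))^{p-1}f(t)$ with $F(t)=\int_0^t f$; and for the equality assertion you argue necessity (equality forces the indicator form), whereas the theorem only claims sufficiency, which is immediate by substituting $x_k=x_1\mathbf{1}_{k\leq s}$ into both sides --- the converse you sketch is not needed and would require an extra equality analysis in Minkowski.
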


   One may also consider the integral analogues of Theorem \ref{thm1} and there is a rich literature on this
   area
   and we shall refer the reader to the
   articles \cite{Bergh}, \cite{N}, \cite{Bu}, \cite{S}, \cite{Lai},
    \cite{BBP}, \cite{BBP1}, \cite{BPS}, \cite{CRS} and
   the reference therein for the related studies. We point out
   here one may deduce Theorem \ref{thm1} from its integral
   analogues by considering suitable integrals on suitable measure spaces (see for
   example, \cite{BPS} and \cite{CRS}).

    A special case of Theorem \ref{thm1} appeared in
   \cite{B1}, where Bennett established the following inequality
   for $0<p<1, x_1 \geq x_2 \geq \ldots \geq 0$,
\begin{equation}
\label{6}
   \sum^{\infty}_{n=1}\Big (\frac 1{n} \sum^{\infty}_{k=n}x_k \Big
   )^p \leq \frac {\pi p}{\sin \pi p}\sum^{\infty}_{n=1}x^p_n.
\end{equation}
   The constant $\pi p/\sin (\pi p)$ is best possible. An integral analogue of the above inequality was established by
   Bergh in \cite{Bergh} and he then used it to deduce a slightly
   weaker result than inequality \eqref{6}.

   Our interest in Theorem \ref{thm1} starts from the
   following inequality ($0<p<1$) for any non-negative ${\bf x}$:
\begin{equation}
\label{17}
  \sum^{\infty}_{n=1}\Big(\frac 1{n} \sum^{\infty}_{k=n}x_k \Big
  )^p \geq c_p \sum^{\infty}_{n=1}x^p_n.
\end{equation}
   It is shown in \cite[Theorem 345]{HLP} that the above inequality holds with $c_p=p^p$ for $0<p<1$ and it is also noted there that the constant $p^p$ may
   not be best possible and the best possible constant was in fact later obtained by Levin and Ste\v ckin
  \cite[Theorem 61]{L&S} to be $(p/(1-p))^p$ for $0<p \leq 1/3$.
  Recently, the author \cite{G9} has extended the result
  of Levin and Ste\v ckin to hold for
  $0 < p  \leq 0.346$. Inequalities of type \eqref{17} with more general weights are also studied in \cite{G9}, among which the
  following one for $0<p<1, \alpha \geq 1$:
\begin{equation*}
  \sum^{\infty}_{n=1}\Big(\frac 1{n^{\alpha}} \sum^{\infty}_{k=n}((k+1)^{\alpha}-k^{\alpha})x_k \Big
  )^p \geq c_{p, \alpha} \sum^{\infty}_{n=1}x^p_n.
\end{equation*}
   Here $c_{p, \alpha}$ is a constant and note that the above inequality gives back \eqref{17} when $\alpha=1$.
   In view of \eqref{6}, it's then natural to consider the reversed inequality if
   we assume further that $x_1 \geq x_2 \geq \ldots \geq 0$.

   It is our goal in this
   paper to first give an alternate proof of Theorem \ref{thm1}
   in Section \ref{sec 2} using the approach of Bergh in \cite{Bergh} and then
   using Theorem \ref{thm1} to prove the following result in
   Section \ref{sec 3}:
\begin{theorem}
\label{thm1.2}
  Let $0<p < 1, \alpha \geq 1, \alpha p<1, 0 \leq t \leq 1, x_1 \geq x_2 \geq \ldots \geq 0$. We
  have
\begin{equation}
\label{8}
   \sum^{\infty}_{n=1}\Big(\frac 1{n^{\alpha}} \sum^{\infty}_{k=n}((k+t)^{\alpha}-(k+t-1)^{\alpha})x_k \Big
  )^p \leq \frac {1}{\alpha}B(\frac {1}{\alpha}-p,p+1)
  \sum^{\infty}_{n=1}x^p_n,
\end{equation}
   where $B(x,y), x>0, y>0$ is the beta function
\begin{equation*}
   B(x,y)=\int^1_0t^{x-1}(1-t)^{y-1}dt.
\end{equation*}
   Inequality \eqref{8} also holds for $t=1, 0< \alpha <1$, $(1-\alpha)/(1+\alpha^2) \leq p \leq 1$. Moreover, the constant is best
   possible. Inequality \eqref{8} reverses when $t=1, \alpha>0, p \geq 1$
   with the best possible constant $(2^{\alpha}-1)^p$.
\end{theorem}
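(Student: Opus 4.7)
The plan is to apply Theorem \ref{thm1} to the matrix $A=(a_{n,k})_{n,k\geq 1}$ defined by $a_{n,k} = n^{-\alpha}\bigl((k+t)^{\alpha}-(k+t-1)^{\alpha}\bigr)$ for $k\geq n$ and $a_{n,k}=0$ for $k<n$. The left side of \eqref{8} is then exactly $\|A\mathbf{x}\|_p^p$, so I would invoke the dual (upper-bound, $\max$) part of Theorem \ref{thm1} with $q=p$. The partial row sums telescope:
\[
  \sum_{k=1}^{r} a_{n,k} \;=\; \frac{(r+t)^{\alpha}-(n+t-1)^{\alpha}}{n^{\alpha}} \qquad (r\geq n),
\]
and vanish otherwise. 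Therefore the problem reduces to evaluating $\lambda^{p}=\sup_{r\geq 1} G(r)$, where
\[
  G(r) := \frac{1}{r}\sum_{n=1}^{r}\frac{\bigl((r+t)^{\alpha}-(n+t-1)^{\alpha}\bigr)^{p}}{n^{\alpha p}}.
\]

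Scaling $n=ru$ exhibits $G(r)$ as a Riemann-type sum approximating $\int_0^1 (1-u^{\alpha})^{p} u^{-\alpha p}\,du$. The substitution $v=u^{\alpha}$ evaluates this integral to $\frac{1}{\alpha} B\bigl(\frac{1}{\alpha}-p,\,p+1\bigr)$; the hypothesis $\alpha p<1$ is precisely what makes the beta function finite. Sharpness of the constant then comes almost for free by testing \eqref{8} with the truncated sequence $x_k=1$ for $k\leq r$, $x_k=0$ otherwise: both sides are finite and their ratio equals $G(r)$, which tends to this beta value as $r\to\infty$. Thus the nontrivial content of the theorem is the uniform upper bound $G(r)\leq \frac{1}{\alpha} B\bigl(\frac{1}{\alpha}-p,\,p+1\bigr)$ for every finite $r$.

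Establishing this uniform bound is the step I expect to be the main obstacle. The integrand $g(u) = u^{-\alpha p}(1-u^{\alpha})^{p}$ is decreasing on $(0,1]$, which puts the naive Riemann sum $\frac{1}{r}\sum_{n=1}^r g(n/r)$ beneath $\int_0^1 g$; however, under $0\leq t\leq 1$ the shifts inside $G(r)$ (namely $(n+t-1)^{\alpha}\leq n^{\alpha}$ and $(r+t)^{\alpha}\geq r^{\alpha}$) push each summand \emph{above} $g(n/r)$, so a direct Riemann-sum comparison is not available. My plan is to write $(k+t)^{\alpha}-(k+t-1)^{\alpha}=\int_{k+t-1}^{k+t}\alpha s^{\alpha-1}\,ds$ and interchange sum and integral, then bound the resulting expression by combining the monotonicity of $s\mapsto s^{\alpha-1}$ (for $\alpha\geq 1$, whence the restriction in the first assertion) with the decrease of $g$, so as to absorb the shifts into the Riemann-sum comparison against $\int_0^1 g$. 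The reversed inequality for $t=1$, $\alpha>0$, $p\geq 1$ proceeds identically via the $\min$ version of Theorem \ref{thm1}; a direct evaluation gives $G(1)=(2^{\alpha}-1)^{p}$, so the task there is to show that $G$ attains its minimum at $r=1$ under those hypotheses. The extension $0<\alpha<1$, $t=1$, $p\geq (1-\alpha)/(1+\alpha^{2})$ requires a sharper but parallel analysis of $G(r)$, the delicate constraint on $p$ being what the same monotonicity-based argument can support once $s^{\alpha-1}$ is no longer monotone increasing.
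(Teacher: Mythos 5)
Your reduction via Theorem \ref{thm1} to the quantity $G(r)$, the identification of $\lim_{r\to\infty}G(r)$ with $\frac{1}{\alpha}B(\frac{1}{\alpha}-p,p+1)$ via the substitution $v=u^{\alpha}$, and the mechanism for sharpness all agree with the paper. But the entire content of the theorem is the step you defer, namely the uniform bound $G(r)\le\lim_{r\to\infty}G(r)$, and the route you sketch for it does not close. The obstruction is not the shifts coming from $t$ (the paper disposes of general $0\le t\le 1$ by termwise domination by the $t=1$ case, so only $t=1$ matters): even at $t=1$ one has $G(r)=\frac{1}{r}\sum_{n=1}^{r}g\bigl(\frac{n}{r+1}\bigr)$ with $g(u)=u^{-\alpha p}(1-u^{\alpha})^{p}$, and the monotone-Riemann-sum comparison you invoke only yields $G(r)\le\frac{r+1}{r}\int_0^1 g$, which is too weak by exactly the factor that matters. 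One cannot repair this by convexity of $g$ either, because $g$ is \emph{not} convex on $(0,1)$ when $p<1$: its second derivative is a positive multiple of $\alpha p+1-(1+\alpha)u^{\alpha}$, which is negative for $u$ near $1$. So no combination of the decrease of $g$ with the monotonicity of $s\mapsto s^{\alpha-1}$ will ``absorb the shifts''; a genuinely different idea is needed.

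That idea, in the paper, is Bennett's symmetrization: since the nodes $n/(r+1)$ are symmetric about $1/2$, one has $2G(r)=A_r(f_{\alpha,p})$ where $f_{\alpha,p}(x)=g(x)+g(1-x)$ and $A_n(f)=\frac{1}{n}\sum_{k=1}^{n}f\bigl(\frac{k}{n+1}\bigr)$, and by the Bennett--Jameson theorem $A_n(f)$ increases to $\int_0^1 f$ for convex $f$. The real work --- occupying most of the paper's proof --- is verifying that $f_{\alpha,p}$ is convex for $\alpha\ge 1$, $0<p<1$, even though $g$ alone is not; this monotonicity simultaneously gives the upper bound, the sharpness, and (for $p\ge 1$, where $x\mapsto(1-x^{\alpha})^{p}x^{-\alpha p}$ is genuinely convex) the reversed inequality with constant $G(1)=(2^{\alpha}-1)^{p}$. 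For $0<\alpha<1$ the threshold $(1-\alpha)/(1+\alpha^{2})\le p$ arises as a necessary condition for the symmetrized second derivative to be nonnegative in the limit $x\to 0^{+}$, so your attribution of that constraint to a Riemann-sum monotonicity argument points at the wrong mechanism. As written, the proposal records the correct skeleton but omits the theorem's only nontrivial ingredient.
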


  We note that the case $\alpha=1, t=1, 0<p<1$ in the above theorem gives
  back inequality \eqref{6} and the integral analogue of Theorem
  \ref{thm1.2} has been studied in \cite{Lai}. Some consequences of Theorem \ref{thm1.2} are deduced in Section
  \ref{sec 4} and other applications of Theorem \ref{thm1} are
  given in Sections \ref{sec 5} and \ref{sec 6}.

\section{Proof of Theorem \ref{thm1}}
\label{sec 2} \setcounter{equation}{0}

   We need a lemma first:
\begin{lemma}
\label{lem1}
   Let $p \geq 1, 0<q \leq p$, then for any positive sequences $(a_j)_{1 \leq j \leq m}$ and any non-negative sequence $(b_j)_{1 \leq j \leq m}$, we have
\begin{equation}
\label{7}
   \Big (\sum^m_{j=1}(a_j+b_j)^q\Big )^{p/q-1}\Big (\sum^m_{j=1}(a_j+b_j)^{q-1}a_j\Big
   ) \geq \Big (\sum^m_{j=1}a^q_j\Big )^{p/q}.
\end{equation}
   The above inequality reverses when $0< p \leq 1,  q \geq p$.
\end{lemma}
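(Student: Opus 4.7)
The plan is to set $c_j := a_j + b_j$ (so $c_j \geq a_j \geq 0$) and split the argument into the sub-cases $q \geq 1$ versus $0 < q < 1$; in each direction of the inequality, one sub-case is handled by elementary monotonicity while the other requires a single application of Hölder's inequality.

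The elementary regimes are $q \geq 1$ for the forward inequality and $0 < q \leq 1$ for the reverse. In the forward case the bound $c_j^{q-1} \geq a_j^{q-1}$ gives $\sum c_j^{q-1}a_j \geq \sum a_j^q$, and $\bigl(\sum c_j^q\bigr)^{p/q-1} \geq \bigl(\sum a_j^q\bigr)^{p/q-1}$ holds since $p/q - 1 \geq 0$; these multiply to give the claim at once. The reverse case is symmetric: since $q - 1 \leq 0$ and $p/q - 1 \leq 0$, both monotonicity steps flip simultaneously.

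The substantive cases are $0 < q < 1 \leq p$ (forward) and $q > 1$ with $p \leq 1$ (reverse), both reducing to a single Hölder inequality. For the forward case I would apply Hölder with conjugate exponents $1/q$ and $1/(1-q)$ to the factorisation $a_j^q = (c_j^{q-1}a_j)^{q}\cdot c_j^{q(1-q)}$, obtaining
\[
\sum a_j^q \leq \Big(\sum c_j^{q-1}a_j\Big)^q \Big(\sum c_j^q\Big)^{1-q}.
\]
Raising to the $p/q$-th power and then invoking the pointwise fact $\sum c_j^{q-1}a_j \leq \sum c_j^q$ in the form $(\sum c_j^{q-1}a_j)^{p-1}\leq (\sum c_j^q)^{p-1}$, valid since $p - 1 \geq 0$, collapses the residual exponents to the desired form. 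For the reverse case Hölder with exponents $q/(q-1)$ and $q$ applied to $\sum c_j^{q-1}a_j$ gives
\[
\sum c_j^{q-1}a_j \leq \Big(\sum c_j^q\Big)^{(q-1)/q}\Big(\sum a_j^q\Big)^{1/q},
\]
after which the now non-positive exponent $(p-1)/q$ combined with $\sum c_j^q \geq \sum a_j^q$ converts the residual $c$-sum into an $a$-sum.

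The main obstacle is bookkeeping rather than analytic depth: one has to track the signs of $p - 1$ and $p/q - 1$ across the four sub-cases so that the correct monotonicity is invoked at the correct step. The Hölder exponents in each substantive sub-case are essentially forced by the requirement that the residual powers of $\sum c_j^q$ and $\sum c_j^{q-1}a_j$ line up with the target form $\bigl(\sum c_j^q\bigr)^{p/q-1}\bigl(\sum c_j^{q-1}a_j\bigr)$.
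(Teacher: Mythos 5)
Your proof is correct, but it is organized differently from the paper's. The paper proves the forward case by first rewriting \eqref{7} in the equivalent form
\begin{equation*}
   \Big (\sum^m_{j=1}(a_j+b_j)^q\Big )^{1-q/p}\Big (\sum^m_{j=1}(a_j+b_j)^{q-1}a_j\Big)^{q/p} \geq \sum^m_{j=1}a^q_j,
\end{equation*}
and then applying H\"older's inequality once, with the conjugate exponents $p/(p-q)$ and $p/q$ adapted to \emph{both} parameters, to bound the left-hand side below by $\sum_j (a_j+b_j)^{q(1-1/p)}a_j^{q/p}$, which is $\geq \sum_j a_j^q$ termwise. Because those exponents depend only on the ratio $q/p$, this single application covers the entire range $0<q\leq p$ with no case distinction on the sign of $q-1$; the reverse case is handled symmetrically. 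Your argument instead fixes the H\"older exponents by $q$ alone ($1/q,\,1/(1-q)$ in one regime, $q/(q-1),\,q$ in the other), which forces the fourfold split and the extra bookkeeping step comparing $\bigl(\sum_j c_j^{q-1}a_j\bigr)^{p-1}$ with $\bigl(\sum_j c_j^q\bigr)^{p-1}$; all of those steps check out (each sub-case's sign conditions are exactly the hypotheses), so nothing is missing. What your route buys is that two of the four sub-cases become purely elementary monotonicity with no H\"older at all; what the paper's choice of exponents buys is a two-line uniform proof. If you want to compress your argument, the lesson is that normalizing the target exponent to $1$ (raising to the power $q/p$ first) is what reveals the ``right'' H\"older exponents.
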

\begin{proof}
     We shall only consider the case $p \geq 1, 0<q \leq p$ here, the other case is being analogue. We recast inequality \eqref{7} as
\begin{equation*}
   \Big (\sum^m_{j=1}(a_j+b_j)^q\Big )^{1-q/p}\Big (\sum^m_{j=1}(a_j+b_j)^{q-1}a_j\Big
   )^{q/p} \geq \sum^m_{j=1}a^q_j.
\end{equation*}
   Applying H\"older's inequality to the left-hand side expression
   above, we obtain
\begin{eqnarray*}
   \Big (\sum^m_{j=1}(a_j+b_j)^q\Big )^{1-q/p}\Big (\sum^m_{j=1}(a_j+b_j)^{q-1}a_j\Big
   )^{q/p} \geq \sum^m_{j=1}(a_j+b_j)^{q(1-1/p)}a^{q/p}_j \geq \sum^m_{j=1}a^q_j.
\end{eqnarray*}
   This completes the proof.
\end{proof}

   We now prove Theorem \ref{thm1}. As the proofs are similar for both cases, we shall focus only on establishing
   \eqref{1} for $p \geq 1, 0<q \leq p$ and we shall also leave the discussion on the cases of equality
   to the reader. We may also assume $a_{j,k} >0$ for all $j, k$ and the general case follows from a limiting process. By
   homogeneity, we see that one can make inequality \eqref{1} valid by taking $\lambda$ to be $\lambda_0=\min \{ ||A{\bf x}||_q : ||x||_p=1, x_1 \geq \ldots \geq x_n \geq 0
   \}$. By compactness, $\lambda_0$ is attained at some ${\bf x}_0 \neq 0$. We may assume the right-hand side expression of \eqref{3} is $>0$ for otherwise
   inequality \eqref{1} holds trivially. This readily implies that
   $\lambda_0 \neq 0$. Certainly $\lambda_0$ is no more than the right-hand expression of \eqref{3} and suppose now that $\lambda_0$ is strictly less than
   the right-hand expression of \eqref{3} and it's attained at a
   vector ${\bf x}_0$ satisfying: $({\bf x}_0)_k=x, 1 \leq k \leq i$ for some $k$ with $1 \leq i \leq
   n-1$ and $({\bf x}_0)_{i+1}=1<x$ (by homogeneity). We now regard
   $x$ as a variable and consider the following function:
\begin{equation*}
   f(x)=\frac {||A{\bf
   x}_0||^p_q}{||{\bf x}_0||^p_p}.
\end{equation*}
  We then have at ${\bf x}_0$,
\begin{align*}
   f'(x)  = \frac {p}{||{\bf x}_0||^p_p}\Big(x^{-1}||A{\bf
   x}_0||^{p-q}_q\sum^m_{j=1}(\sum^n_{k=1}a_{j,k}({\bf x}_0)_k)^{q-1}\sum^i_{k=1}a_{j,k}({\bf x}_0)_k-ix^{p-1}\frac {||A{\bf
   x}_0||^p_q}{||{\bf x}_0||^p_p} \Big ).
\end{align*}
   We set $a_j=\sum^i_{k=1}a_{j,k}({\bf x}_0)_k$ and $b_j=\sum^n_{k=i+1}a_{j,k}({\bf
   x}_0)_k$ (note that $a_j>0$) in Lemma \ref{lem1} to see that
\begin{align*}
   ||A{\bf
   x}_0||^{p-q}_q\sum^m_{j=1}(\sum^n_{k=1}a_{j,k}({\bf x}_0)_k)^{q-1}\sum^i_{k=1}a_{j,k}({\bf x}_0)_k
   \geq  \Big(\sum^m_{j=1}(\sum^i_{k=1}a_{j,k}x)^{q} \Big )^{p/q}.
\end{align*}
   It follows that
\begin{equation*}
   f'(x)  \geq \frac {p}{||{\bf x}_0||^p_p}\Big(x^{-1}\Big(\sum^m_{j=1}(\sum^i_{k=1}a_{j,k}x)^{q}\Big )^{p/q}-ix^{p-1}\lambda^p_0 \Big
   )>0.
\end{equation*}
   This leads to a contradiction and Theorem \ref{thm1} is thus
   proved.

\section{Proof of Theorem \ref{thm1.2}}
\label{sec 3} \setcounter{equation}{0}

   We first consider the case when $0<p<1$. We may certainly focus on establishing our assertion for
   inequality \eqref{8} with the infinite sums there replaced by any
   finite sums, say from $1$ to $N$. We now consider the case $\alpha \geq 1, t=1$. Theorem \ref{thm1} readily
   implies that in this case, the best constant is given by $\max_{1 \leq r \leq
   N} s_r$, where
\begin{equation*}
   s_r=
   r^{-1}\sum^{r}_{k=1}\Big(\frac {(r+1)^{\alpha}-k^{\alpha}}{k^{\alpha}} \Big
  )^p.
\end{equation*}
   Suppose we can show that the sequence $(s_r)$ is non-decreasing, then the maximum occurs when $r=N$, and as
   $N \rightarrow \infty$, one obtains the constant in \eqref{8}
   easily and this also shows that the constant there is best
   possible. It rests thus to show the sequence $(s_r)$ is
   non-decreasing. To show this, we use the trick of Bennett in
   \cite{B1} (see also \cite[Proposition 7]{B&J}) on considering the following function:
\begin{equation*}
   f_{\alpha, p}(x)=\Big (\frac {1-x^{\alpha}}{x^{\alpha}}\Big )^p+\Big (\frac {1-(1-x)^{\alpha}}{(1-x)^{\alpha}}\Big
   )^p.
\end{equation*}
   For $n \geq 1$ and any given function $f$ defined on $(0, 1)$,
   we define
\begin{equation*}
    A_n(f)=\frac {1}{n}\sum^{n}_{r=1}f\Big(\frac {r}{n+1} \Big).
\end{equation*}
   Note that we then have $s_n=2A_{n}(f_{\alpha, p})$. It then suffices to show that $A_{n}(f_{\alpha, p})$ increases with $n$.
   A result of Bennett and Jameson \cite[Theorem 1]{B&J}
    asserts that if $f$ is a convex function on $(0, 1)$, then $A_n(f)$ increases with
    $n$. Thus, it suffices to show that $f_{\alpha, p}$ is convex
    on $(0,1)$ and direct calculation shows that
\begin{align*}
   f''_{\alpha, p}(x) & = \alpha p x^{-\alpha p
   -2}(1-x^{\alpha})^{p-2}(\alpha p+1-(1+\alpha)x^{\alpha}) \\
   &+\alpha p (1-x)^{-\alpha p
   -2}(1-(1-x)^{\alpha})^{p-2}(\alpha
   p+1-(1+\alpha)(1-x)^{\alpha}).
\end{align*}
   As $f''_{\alpha, p}(x)=f''_{\alpha, p}(1-x)$, it suffices to show $f''_{\alpha, p}(x) \geq 0$ for
   $0<x \leq 1/2$. Assuming $0<x \leq 1/2$, we recast $f''_{\alpha,
   p}(x)$ as
\begin{align*}
   f''_{\alpha, p}(x)  & = \alpha p (1-x)^{-\alpha p
   -2}(1-(1-x)^{\alpha})^{p-2} \\
    & \cdot \Big ( g_{\alpha, p}(x)(\alpha p+1-(1+\alpha)x^{\alpha})+(\alpha
   p+1-(1+\alpha)(1-x)^{\alpha}) \Big ),
\end{align*}
   where
\begin{equation*}
  g_{\alpha, p}(x)=\Big ( \frac {1-x^{\alpha}}{x^{\alpha}} \cdot \frac {(1-x)^{\alpha}}{1-(1-x)^{\alpha}}  \Big
  )^p \cdot \Big ( \frac {1-x}{1-x^{\alpha}} \cdot \frac {1-(1-x)^{\alpha}}{1-(1-x)}  \Big
  )^2.
\end{equation*}
   It is easy to show that both factors of $g_{\alpha, p}(x)$ are
   $\geq 1$ and that $\alpha p+1-(1+\alpha)x^{\alpha} \geq 0$ when
   $0<x\leq 1/2$. We bound $g_{\alpha, p}(x)$ by
\begin{equation*}
  g_{\alpha, p}(x) \geq \frac {1-x}{1-x^{\alpha}} \cdot \frac {1-(1-x)^{\alpha}}{1-(1-x)}.
\end{equation*}
    It then follows that $f''_{\alpha, p}(x) \geq 0$ as long as
\begin{align*}
    \frac {1-x}{1-x^{\alpha}} \cdot \frac {1-(1-x)^{\alpha}}{1-(1-x)}(\alpha p+1-(1+\alpha)x^{\alpha})+(\alpha
   p+1-(1+\alpha)(1-x)^{\alpha}) \geq 0.
\end{align*}
   It suffices to establish the above inequality for $p=0$ and in
   this case we recast it as $h_{\alpha}(x)+h_{\alpha}(1-x) \geq
   0$ where
\begin{align*}
   h_{\alpha}(x)= \frac {1-x}{1-x^{\alpha}}(1-(1+\alpha)x^{\alpha})=(1+\alpha)(1-x)-\frac {\alpha(1-x)}{1-x^{\alpha}}.
\end{align*}
   It is easy to show that $h_{\alpha}(x)$ is concave on $(0,1)$
   and it follows from the theory of majorization (see, for example, Section 6 of \cite{B&J}) that for
   any $0<x<1$,  we have
\begin{equation*}
   h_{\alpha}(x)+h_{\alpha}(1-x) \geq \lim_{x \rightarrow
   0^+}(h_{\alpha}(x)+h_{\alpha}(1-x))=0.
\end{equation*}
   This now completes the proof of Theorem \ref{thm1.2} when
   $0<p<1, \alpha \geq 1, t=1$. Before we move to the proof of other
   cases, we point out here an alternative proof of $h_{\alpha}(x)+h_{\alpha}(1-x) \geq
   0$ is that one can show easily that $h_{\alpha}(x)$ is an increasing
   function of $\alpha \geq 1$ for fixed $x$ so that $h_{\alpha}(x)+h_{\alpha}(1-x) \geq \lim_{\alpha \rightarrow
   1^+}(h_{\alpha}(x)+h_{\alpha}(1-x))=0$. This will be our
   approach for the case $0<p \leq 1, 0<\alpha<1$ in what follows.

   Now the general case $0<p<1, \alpha \geq 1, 0 \leq t \leq 1$, we note that the left-hand side expression of \eqref{8} is termwise no larger
than the corresponding term when $t=1$. Therefore, inequality
\eqref{8} follows from the case $t=1$. To show the constant is
best possible, we use
    Theorem \ref{thm1} again to see that the best constant is given by $\max_{1 \leq r \leq
   N} s(t)_r$, where
\begin{equation*}
   s(t)_r=
   r^{-1}\sum^{r}_{k=1}\Big(\frac {(r+t)^{\alpha}-(k+t-1)^{\alpha}}{k^{\alpha}} \Big
  )^p \geq r^{-1}\sum^{r}_{k=1}\Big(\frac {r^{\alpha}-k^{\alpha}}{k^{\alpha}} \Big
  )^p.
\end{equation*}
   It follows that $\lim_{N \rightarrow \infty}s(t)_N \geq \frac {1}{\alpha}B(\frac {1}{\alpha}-p,p+1)$,
   this combining with our discussions above completes the proof
   of Theorem \ref{thm1.2} when $0<p<1, \alpha \geq 1$.

    For the case $0<p \leq 1, 0 < \alpha <1$, we can use the same approach as above except this time we bound $g_{\alpha, p}(x)$ by
\begin{equation*}
  g_{\alpha, p}(x) \geq \Big (\frac {1-x}{1-x^{\alpha}} \cdot \frac {1-(1-x)^{\alpha}}{1-(1-x)} \Big )^2.
\end{equation*}
    We define for $0<x<1$,
\begin{align*}
   u_{\alpha,p}(x)= \Big (\frac {1-x}{1-x^{\alpha}} \Big )^2(\alpha p+1-(1+\alpha)x^{\alpha}).
\end{align*}
    It remains to show $u_{\alpha,p}(x)+u_{\alpha,p}(1-x) \geq 0$ (note that this also implies that that $u_{\alpha,p}(1/2) \geq
   0$). On considering the limit as $x \rightarrow 0^+$, we see that it is
necessary to have $(1-\alpha)/(1+\alpha^2) \leq p \leq 1$. We now
assume this condition for $p$ and note that it suffices to
establish $u_{\alpha,p}(x)+u_{\alpha,p}(1-x) \geq 0$ for
$p=(1-\alpha)/(1+\alpha^2)$. We write $u_{\alpha,
    (1-\alpha)/(1+\alpha^2)}(x)=(1+\alpha)/(1+\alpha^2)v(\alpha,x)$
    with
\begin{equation*}
   v(\alpha, x)=(1-x)^2\frac {1-(1+\alpha^2)x^{\alpha}}{(1-x^{\alpha})^2}.
\end{equation*}
    It remains thus to show $v(\alpha, x)+v(\alpha, 1-x) \geq 0$. Calculation shows
\begin{equation*}
   \frac {\partial v}{\partial \alpha} =\frac
   {x^{\alpha}(1-x)^2}{\alpha(1-x^{\alpha})^3}w_{\alpha}(x^{\alpha}),
\end{equation*}
   with $w_{\alpha}(t)=-2\alpha^2+(1-\alpha^2)\ln t
   -(1+\alpha^2)t\ln t+2\alpha^2 t, 0<t \leq 1$. As
   $w_{\alpha}(1)=w'_{\alpha}(1)=0$ and $w''_{\alpha}(t)<0$, one
   sees easily that $w_{\alpha}(t) \leq 0$ for $0 < t \leq 1$ and it follows
   that when $0<x<1$,
\begin{equation*}
   v(\alpha, x)+v(\alpha, 1-x) \geq \lim_{\alpha \rightarrow 1^-}(v(\alpha, x)+v(\alpha,
   1-x))=0.
\end{equation*}
   This completes the proof of Theorem \ref{thm1.2} when $0<p \leq 1, 0< \alpha <1$.

   Lastly, when $p \geq 1$, the assertion of the theorem follows
   as long as we can show the sequence $(s_r)$ is increasing,
   where $(s_r)$ is defined as above. In this case, it's easy to
   see that the function $x \mapsto (1-x^{\alpha})^px^{-\alpha p}$
   is convex on $(0, 1)$ when $\alpha>0, p \geq 1$ so that our discussions
   above can be applied here and this completes the proof.

\section{Some consequences of Theorem \ref{thm1.2}}
\label{sec 4} \setcounter{equation}{0}
   In this section we deduce some consequences from Theorem \ref{thm1.2}. We
note that $(k+1)^{\alpha}-k^{\alpha} \geq \alpha
   k^{\alpha-1}$ when $\alpha \geq 1$ and it is also easy to show by induction that
\begin{equation*}
   \alpha \sum^r_{n=k}n^{\alpha-1} \geq r^{\alpha}-k^{\alpha}.
\end{equation*}
   A similar argument to the proof of
   Theorem \ref{thm1.2} then allows us to establish
\begin{cor}
\label{cor2}
  Let $0<p<1, \alpha \geq 1, \alpha p<1, x_1 \geq x_2 \geq \ldots \geq 0$. We
  have
\begin{equation}
\label{4.1}
   \sum^{\infty}_{n=1}\Big(\frac 1{n^{\alpha}} \sum^{\infty}_{k=n}\alpha k^{\alpha-1}x_k \Big
  )^p \leq \frac {1}{\alpha}B(\frac {1}{\alpha}-p,p+1)
  \sum^{\infty}_{n=1}x^p_n.
\end{equation}
  The constant is best possible.
\end{cor}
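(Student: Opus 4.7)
Corollary \ref{cor2} comes from Theorem \ref{thm1.2} and Theorem \ref{thm1} with very little extra work, and the plan below splits the task into two parts: the inequality \eqref{4.1} itself and the best-possible claim.

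For the inequality, note that $\alpha k^{\alpha-1}\le (k+1)^{\alpha}-k^{\alpha}$ when $\alpha\ge 1$, so the left-hand side of \eqref{4.1} is term-wise no larger than the left-hand side of \eqref{8} in the case $t=1$. Hence \eqref{4.1}, with the constant $\alpha^{-1}B(\alpha^{-1}-p,p+1)$, follows immediately from Theorem \ref{thm1.2}.

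For the best-possible claim, I would first truncate all sums to $1\le n,k\le N$ and apply the reversed form of Theorem \ref{thm1} with $q=p\le 1$ to the matrix $a_{j,k}=\alpha k^{\alpha-1}/j^{\alpha}$ for $k\ge j$ (zero otherwise). This identifies the best constant for the finite problem as $\max_{1\le r\le N}s'_r$, where
\[
s'_r=\frac{1}{r}\sum_{j=1}^{r}\Big(\frac{\alpha}{j^{\alpha}}\sum_{k=j}^{r}k^{\alpha-1}\Big)^{p}.
\]
Any constant valid for the infinite inequality must dominate every $s'_r$, so it is enough to show that $\limsup_r s'_r\ge \alpha^{-1}B(\alpha^{-1}-p,p+1)$. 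Using the induction inequality $\alpha\sum_{k=j}^{r}k^{\alpha-1}\ge r^{\alpha}-j^{\alpha}$ noted just above the corollary yields
\[
s'_r\ge \frac{1}{r}\sum_{j=1}^{r}\big((r/j)^{\alpha}-1\big)^{p},
\]
and this is a Riemann sum for $\int_{0}^{1}(t^{-\alpha}-1)^{p}\,dt$, which the substitution $s=t^{\alpha}$ evaluates to exactly $\alpha^{-1}B(\alpha^{-1}-p,p+1)$.

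The main technical step is justifying the Riemann-sum convergence despite the singularity at $t=0$. Since $(t^{-\alpha}-1)^{p}$ is positive and decreasing on $(0,1)$, one can sandwich the sum as
\[
\int_{1/r}^{1}(t^{-\alpha}-1)^{p}\,dt\ \le\ \frac{1}{r}\sum_{j=1}^{r}\big((r/j)^{\alpha}-1\big)^{p}\ \le\ \int_{1/r}^{1}(t^{-\alpha}-1)^{p}\,dt+\frac{(r^{\alpha}-1)^{p}}{r}.
\]
The hypothesis $\alpha p<1$ simultaneously guarantees that the integral converges as $r\to\infty$ and that the boundary term $r^{-1}(r^{\alpha}-1)^{p}=O(r^{\alpha p-1})$ vanishes, giving $\lim_r s'_r=\alpha^{-1}B(\alpha^{-1}-p,p+1)$ and completing the best-possible claim.
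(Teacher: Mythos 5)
Your proposal is correct and takes essentially the same approach as the paper: the termwise comparison $\alpha k^{\alpha-1}\le (k+1)^{\alpha}-k^{\alpha}$ reduces \eqref{4.1} to Theorem \ref{thm1.2}, and the induction inequality $\alpha\sum_{k=j}^{r}k^{\alpha-1}\ge r^{\alpha}-j^{\alpha}$ together with Theorem \ref{thm1} yields optimality via the same Riemann sum for $\frac{1}{\alpha}B(\frac{1}{\alpha}-p,p+1)$. Your explicit sandwich argument handling the singularity at $t=0$ is a detail the paper leaves implicit but is carried out correctly.
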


  We recall here the function $L_r(a,b)$ for $a>0, b>0, a \neq b$ and
$r \neq 0, 1$ (the only case we shall concern here) is defined
   as $L^{r-1}_r(a,b)=(a^r-b^r)/(r(a-b))$.
   We also write $L_{\infty}(a,b)$ as $\lim_{r \rightarrow \infty}L_{r}(a,b)$ and
   note that $L_{\infty}(a,b)=\max (a,b)$. Using this notation, the matrix $(a_{j,k})$ associated to inequality \eqref{4.1}
   is thus given by
   $a_{j,k}=L^{\alpha-1}_{\infty}(k,k-1)/\sum^j_{i=1}L^{\alpha-1}_{\alpha}(i,
   i-1)$ when $k \geq j$ and $a_{j,k}=0$ otherwise.

   It is known \cite[Lemma 2.1]{alz1.5} that
   the function $r \mapsto L_r(a,b)$ is strictly increasing on ${\mathbb
   R}$, this combining with Corollary \ref{cor2} allows us to establish the first assertion of the following
\begin{cor}
\label{cor3}
  Let $0<p<1, \beta \geq \alpha > 1, \alpha p<1, x_1 \geq x_2 \geq \ldots \geq 0$. We
  have
\begin{equation*}
   \sum^{\infty}_{n=1}\Big ( \frac
1{\sum^n_{i=1}L^{\alpha-1}_{\beta}(i,
i-1)}\sum^{\infty}_{k=n}L^{\alpha-1}_{\beta}(k, k-1)a_k \Big )^p
\leq \frac {1}{\alpha}B(\frac {1}{\alpha}-p,p+1)
  \sum^{\infty}_{n=1}x^p_n.
\end{equation*}
  The constant is best possible when $\alpha \geq 2$.
\end{cor}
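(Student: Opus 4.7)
The plan is to split the proof into the inequality itself and the sharpness claim. For the inequality I would argue by termwise matrix comparison with Corollary~\ref{cor2}. Writing
$$c_{n,k} = \frac{L_\beta^{\alpha-1}(k,k-1)}{\sum_{i=1}^n L_\beta^{\alpha-1}(i,i-1)} \quad (k \geq n)$$
for the coefficient of $x_k$ in the $n$-th term on the LHS of Corollary~\ref{cor3}, and recalling that the Corollary~\ref{cor2} coefficients take the form $a_{n,k} = L_\infty^{\alpha-1}(k,k-1)/\sum_{i=1}^n L_\alpha^{\alpha-1}(i,i-1)$ (since $\alpha k^{\alpha-1} = \alpha L_\infty^{\alpha-1}(k,k-1)$ and $n^\alpha = \alpha \sum_{i=1}^n L_\alpha^{\alpha-1}(i,i-1)$), the monotonicity of $r \mapsto L_r(a,b)$ combined with $\alpha \leq \beta < \infty$ and $\alpha - 1 \geq 0$ gives $L_\beta^{\alpha-1}(k,k-1) \leq L_\infty^{\alpha-1}(k,k-1)$ in each numerator and $\sum_{i=1}^n L_\beta^{\alpha-1}(i,i-1) \geq \sum_{i=1}^n L_\alpha^{\alpha-1}(i,i-1) = n^\alpha/\alpha$ in each denominator. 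Hence $c_{n,k} \leq a_{n,k}$ termwise and the inequality transfers from Corollary~\ref{cor2}.

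For the sharpness claim, I would invoke the ``max'' form of Theorem~\ref{thm1} (with $q = p$, admissible since $0 < p < 1$). Setting $S_n := \sum_{i=1}^n L_\beta^{\alpha-1}(i,i-1)$, the smallest admissible constant equals $\sup_{r \geq 1} s_r$, where
$$s_r = \frac{1}{r}\sum_{n=1}^r \left(\frac{S_r - S_{n-1}}{S_n}\right)^{p}.$$
This is just the ratio of the two sides of the corollary evaluated at the test vector with $r$ leading $1$'s and zeros thereafter. Since the first part already gives $s_r \leq \frac{1}{\alpha}B(1/\alpha - p, p+1)$ for every $r$, it remains to prove $\liminf_{r \to \infty} s_r \geq \frac{1}{\alpha}B(1/\alpha - p, p+1)$.

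I would evaluate this liminf by recognising $s_r$ as the Riemann-sum integral $\int_0^1 f_r(u)\, du$, where $f_r(u) := ((S_r - S_{n-1})/S_n)^p$ for $u \in ((n-1)/r, n/r]$, and invoking Dominated Convergence. Pointwise, $S_n \sim n^\alpha/\alpha$ (Stolz--Ces\`aro applied to the expansion $L_\beta^{\alpha-1}(k,k-1) = k^{\alpha-1}(1+O(1/k))$), so $f_r(u) \to ((1-u^\alpha)/u^\alpha)^p$ on $(0,1)$, and the substitution $v = u^\alpha$ identifies $\int_0^1 ((1-u^\alpha)/u^\alpha)^p\,du$ as $\frac{1}{\alpha}B(1/\alpha - p, p+1)$. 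The main obstacle is producing an integrable dominant that handles the singularity at $u = 0$: uniform bounds $S_r \leq \sum_{i=1}^r i^{\alpha-1} \leq C\, r^\alpha$ and $S_n \geq n^\alpha/\alpha$ (the latter from $L_\beta^{\alpha-1}(i,i-1) \geq L_\alpha^{\alpha-1}(i,i-1) = (i^\alpha - (i-1)^\alpha)/\alpha$) yield $(S_r - S_{n-1})/S_n \leq S_r/S_n \leq C'(r/n)^\alpha$, hence $f_r(u) \leq C'' u^{-\alpha p}$ on $((n-1)/r, n/r]$ via $n/r \geq u$, and integrability is exactly the standing hypothesis $\alpha p < 1$. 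The stronger requirement $\alpha \geq 2$ presumably enters, if at all, via a direct monotonicity argument on $(s_r)$ in the Bennett--Jameson spirit of the proof of Theorem~\ref{thm1.2}, rather than through the DCT bypass above.
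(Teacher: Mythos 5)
Your proof of the inequality itself coincides with the paper's: termwise domination of the matrix of Corollary \ref{cor3} by that of Corollary \ref{cor2}, using the monotonicity of $r \mapsto L_r(a,b)$ in the numerators ($L_{\beta} \leq L_{\infty}$) and denominators ($L_{\beta} \geq L_{\alpha}$, whose $(\alpha-1)$-st powers telescope to $n^{\alpha}/\alpha$). For the sharpness claim, however, you take a genuinely different route. The paper first establishes the discrete ratio inequality \eqref{4.2}, namely $\sum^{n+1}_{i=1}L^{\alpha-1}_{\beta}(i,i-1)/\sum^{n}_{i=1}L^{\alpha-1}_{\beta}(i,i-1) \geq (n+2)^{\alpha}/(n+1)^{\alpha}$, via the two-step convexity argument \eqref{4.3} that is where the hypothesis $\beta \geq \alpha \geq 2$ enters; this produces the explicit termwise lower bound $((N+1)^{\alpha}/(j+1)^{\alpha}-1)^p$, whose Riemann sum is the one already computed in the proof of Theorem \ref{thm1.2}. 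You instead evaluate $\lim_{r} s_r$ directly by dominated convergence, using only the two-sided bounds $n^{\alpha}/\alpha \leq S_n \leq C n^{\alpha}$ and the integrability of $u^{-\alpha p}$ guaranteed by the standing hypothesis $\alpha p < 1$. Your computation is correct (the pointwise limit $S_{\lceil ru \rceil}/S_r \to u^{\alpha}$ follows from $L_{\beta}(k,k-1) \in (k-1,k)$ by the mean value theorem, as you indicate, and the test-vector identification of $s_r$ needs no appeal to the finite-matrix form of Theorem \ref{thm1} for the lower-bound direction), and it buys something real: it proves sharpness for every $\alpha > 1$ with $\beta \geq \alpha$, so the restriction $\alpha \geq 2$ in the corollary's second assertion is an artifact of the paper's method rather than of the result. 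What the paper's approach buys in exchange is an explicit monotone lower bound valid term by term for each finite $N$, rather than only an asymptotic statement about the limit of $s_r$.
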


   To show the constant is best possible when $\alpha \geq 2$, we
   first show that for $n \geq 1, \beta \geq \alpha \geq 2$,
\begin{equation}
\label{4.2}
   \frac
  {\sum^{n+1}_{i=1}L^{\alpha-1}_{\beta}(i, i-1)}{\sum^{n}_{i=1}L^{\alpha-1}_{\beta}(i, i-1)}
  \geq \frac {(n+2)^{\alpha}}{(n+1)^{\alpha}} .
\end{equation}
   By \cite[Lemma 3.1]{G7}, it suffices to show for $n \geq 1$,
\begin{equation*}
   \frac
  {L^{\alpha-1}_{\beta}(n+1, n)}{L^{\alpha-1}_{\beta}(n, n-1)}
  \geq \frac {(n+2)^{\alpha}-(n+1)^{\alpha}}{(n+1)^{\alpha}-n^{\alpha}} .
\end{equation*}
   The above inequality follows from the following inequalities:
\begin{equation}
\label{4.3}
   \frac
  {L^{\alpha-1}_{\beta}(n+1, n)}{L^{\alpha-1}_{\beta}(n, n-1)}
  \geq \frac
  {(n+1)^{\alpha-1}}{n^{\alpha-1}} \geq \frac {(n+2)^{\alpha}-(n+1)^{\alpha}}{(n+1)^{\alpha}-n^{\alpha}} .
\end{equation}
   As $\beta \geq 2$, we have by convexity,
\begin{equation*}
    \frac {1}{n}+\frac {n-1}{n}(\frac {n-1}{n})^{\beta-1} \geq
    (\frac {1}{n}+\frac {n-1}{n} \cdot \frac {n-1}{n})^{\beta-1}
    \geq
    (\frac {n}{n+1})^{\beta-1}.
\end{equation*}
   One checks easily that this implies the first inequality in
   \eqref{4.3} and the second inequality of \eqref{4.3} can be
   shown similarly.

   Now, to see the constant is best possible, we note that Theorem \ref{thm1} implies that the
    constant is no smaller than
\begin{eqnarray*}
    &&\lim_{N \rightarrow \infty}\frac {1}{N}\sum^{N}_{j=1}\Big(\frac
    {\sum^N_{k=j}L^{\alpha-1}_{\beta}(k, k-1)}{\sum^j_{i=1}L^{\alpha-1}_{\beta}(i, i-1)}\Big
    )^p\\
    &\geq&  \lim_{N \rightarrow \infty}\frac {1}{N}\sum^{N}_{j=1}\Big(\frac
    {\sum^N_{k=j+1}L^{\alpha-1}_{\beta}(k, k-1)}{\sum^j_{i=1}L^{\alpha-1}_{\beta}(i, i-1)}\Big
    )^p \geq \lim_{N \rightarrow \infty}\frac {1}{N}\sum^{N}_{j=1}\Big( \frac {(N+1)^{\alpha}}{(j+1)^{\alpha}}-1 \Big
    )^p,
\end{eqnarray*}
   where the last inequality follows from \eqref{4.2}
   and the last limit also
   gives the constant in Corollary \ref{cor3}.

    Note the particular case $\beta=\infty$ of Corollary \ref{cor3} gives
\begin{equation*}
   \sum^{\infty}_{n=1}\Big ( \frac
1{\sum^n_{i=1}i^{\alpha-1}}\sum^{\infty}_{k=n}k^{\alpha-1}a_k \Big
)^p \leq \frac {1}{\alpha}B(\frac {1}{\alpha}-p,p+1)
  \sum^{\infty}_{n=1}x^p_n.
\end{equation*}

\section{Applications of Theorem \ref{thm1} to weighted mean matrices}
\label{sec 5} \setcounter{equation}{0}
   In this section we give more applications of Theorem
   \ref{thm1}. We remark first that the problem of finding lower bounds of non-negative weighted mean
matrices acting on non-increasing non-negative sequences in $l^p$
when $p \geq 1$ has been studied in \cite{B2} and \cite{R&S}. Here
we recall that a weighted mean matrix $(a_{j,k})$ is given by
$a_{j,k}=\lambda_k/\Lambda_j$ for $1 \leq k \leq j$ and
$a_{j,k}=0$ otherwise, where $\Lambda_n=\sum^n_{i=1}\lambda_i,
\lambda_1>0$. We also recall that a N\"orlund matrix $(a_{j,k})$
is given by $a_{j,k}=\lambda_{j-k+1}/\Lambda_j$ for $1 \leq k \leq
j$ and $a_{j,k}=0$ otherwise. In what follows, we shall say a
weighted mean (or a N\"orlund) matrix  $A$ is generated by
$(\lambda_n)$ if its entries are given as above. In the weighted
mean matrix case, it is shown in \cite[Theorem 4]{B2} that when
$\lambda_i=i^{\alpha}, \alpha \geq 1$ or $-1<\alpha \leq 0,
(1+\alpha)p>0$, the corresponding minimum in \eqref{3} is reached
at $r=1$. The case $\alpha \geq 1$ is also shown in
\cite[Corollary 9]{R&S}. We now give an alternative proof of the
case $-1< \alpha \leq 0$ based on the idea used in the proof of
Theorem 4 in \cite{B}. We also give a companion result concerning
the upper bound when $0<\alpha \leq 1$ and $0<p \leq 1$. We have
\begin{cor}
\label{cor4.3}
   Let ${\bf x}$ be a non-negative non-increasing sequence, $p > 1$,
  $-1< \alpha \leq 0, (\alpha+1)p>1$, then
\begin{equation}
\label{4.8}
  \sum^{\infty}_{j=1}\Big(\sum^j_{k=1}\frac {k^{\alpha}}{\sum^j_{i=1}i^{\alpha}}x_k \Big )^p
  \geq \sum^{\infty}_{j=1}\Big( \frac {1}{\sum^j_{i=1}i^{\alpha}} \Big )^p ||{\bf
  x}||^p_p.
\end{equation}
   The above inequality reverses
   when $0<p \leq 1$, $0<\alpha \leq 1, (1+\alpha)p>1$. The constant is best possible in either case.
\end{cor}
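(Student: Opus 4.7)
The plan is to apply Theorem~\ref{thm1} with $q=p$ to the weighted mean matrix $a_{j,k} = k^{\alpha}/\Lambda_j$ (for $k \leq j$, zero otherwise), where $\Lambda_j = \sum_{i=1}^j i^{\alpha}$. By the theorem, the best constant in \eqref{4.8} (respectively in its reverse) equals $\lambda^p = \min_r r^{-1}\Phi(r)$ (resp.\ $\max_r r^{-1}\Phi(r)$), where splitting the inner sum at $k=j$ gives
\begin{equation*}
  \Phi(r) := \sum_{j=1}^{\infty} \Big(\sum_{k=1}^{\min(r,j)} a_{j,k}\Big)^p = (r-1) + \Lambda_r^p \sum_{j=r}^{\infty} \Lambda_j^{-p}.
\end{equation*}
The hypothesis $(1+\alpha)p > 1$, together with $\Lambda_j \sim j^{1+\alpha}/(1+\alpha)$, ensures convergence of the tail sum. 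The corollary then reduces to proving that $r \mapsto r^{-1}\Phi(r)$ is minimized (resp.\ maximized) at $r=1$, which yields the claimed constant $\sum_{j=1}^{\infty} \Lambda_j^{-p}$; best-possibility then follows from the equality clause of Theorem~\ref{thm1} applied to the test sequence $(1,0,0,\ldots)$.

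Using the telescoping identity $G(r) := \sum_{j=r}^{\infty} \Lambda_j^{-p} = \Lambda_r^{-p} + G(r+1)$, one obtains the tidier form $r^{-1}\Phi(r) = 1 + r^{-1}\Lambda_r^p\, G(r+1)$. Following Bennett's strategy in \cite[Theorem~4]{B}, I would attempt to show that $h(r) := r^{-1}\Lambda_r^p\, G(r+1)$ is monotone increasing in $r$ in the lower bound case (decreasing in the upper bound case). This is equivalent to the ratio inequality
\begin{equation*}
  \frac{r}{r+1}\Big(\frac{\Lambda_{r+1}}{\Lambda_r}\Big)^p \geq 1 + \frac{\Lambda_{r+1}^{-p}}{G(r+2)},
\end{equation*}
with the reverse inequality in the upper bound case. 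The natural tools are Bernoulli's inequality applied to $(1 + (r+1)^{\alpha}/\Lambda_r)^p$ (it goes in the correct direction since $p \geq 1$ in the lower bound case and $p \leq 1$ in the upper bound case), combined with an integral or convexity comparison (e.g.\ via convexity of $x \mapsto x^{1-p}$) to control $G(r+2)$ from the right side.

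The main obstacle is proving this ratio inequality, because both sides share the same leading asymptotics: as $r \to \infty$, each equals $1 + ((1+\alpha)p - 1)/r + o(1/r)$, so crude one-term Bernoulli estimates alone are not enough and subleading corrections must be tracked. The sign conditions $-1 < \alpha \leq 0$ in the lower bound case and $0 < \alpha \leq 1$ in the upper bound case are precisely what make the Bernoulli-type and convexity estimates fall the right way, while $(1+\alpha)p > 1$ ensures the common leading coefficient is positive; this interplay between the three hypotheses is the crux of the argument.
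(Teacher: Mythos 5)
Your setup is correct and coincides with the paper's: apply Theorem \ref{thm1} with $q=p$ to the weighted mean matrix, compute $r^{-1}\Phi(r)=1+r^{-1}\Lambda_r^p\sum_{j>r}\Lambda_j^{-p}$, and reduce the corollary to showing the extremum over $r$ occurs at $r=1$ (the best-possibility remark via the test sequence $(1,0,0,\ldots)$ is also fine). But that last reduction is exactly where your argument stops being a proof. You propose to establish full monotonicity of $h(r)=r^{-1}\Lambda_r^p G(r+1)$ via the tail-ratio inequality $\frac{r}{r+1}(\Lambda_{r+1}/\Lambda_r)^p\ge 1+\Lambda_{r+1}^{-p}/G(r+2)$, and then you concede that the two sides agree to order $1/r$ and that one-term Bernoulli estimates do not suffice; the crux of the corollary is thus left unproven. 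Moreover, for the lower-bound case ($p>1$, $-1<\alpha\le 0$) the monotonicity of $h(r)$ is precisely the monotonicity of the sequence \eqref{5.2}, which Bennett posed as a problem and which Section \ref{sec 5} of the paper only settles in restricted ranges of $(\alpha,p)$ that do not cover $-1<\alpha<0$. You are routing the proof through a statement strictly stronger than what is needed and of unclear truth/tractability by your proposed estimates.

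The paper sidesteps both difficulties. For $p>1$ it does not prove monotonicity at all: it blocks the tail as $\sum_{k\ge1} r^{-1}\sum_{i=1}^r(\Lambda_r/\Lambda_{kr+i})^p$ and compares termwise with the $r=1$ expression, which only requires the purely algebraic inequality $\Lambda_r/\Lambda_{(k+1)r}\ge\Lambda_1/\Lambda_{k+1}$, proved by induction for $\lambda_n=n^{\alpha}$, $-1<\alpha\le0$ --- no infinite tails and no asymptotics enter. For $0<p\le1$ it does prove $h(r)$ is decreasing, but via Lemma 7 of \cite{B2} (an Abel-summation device), which converts monotonicity of the weighted tail averages into the local three-term condition $1+n(\Lambda_{n+1}/\Lambda_n)^p\le(n+1)(\Lambda_{n+2}/\Lambda_{n+1})^p$; this eliminates $G$ entirely, and the resulting condition reduces to the case $p=1$, which is Lemma 8 of \cite{B2}. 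To complete your argument you need either such a summation lemma to localize the inequality or the blocking trick; the direct comparison of two infinite tails, as you have set it up, is the hard version of the problem and is not carried out.
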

\begin{proof}
   Note that the condition $(\alpha+1)p>1$ ensures that the constant in \eqref{4.8} is finite.
   We consider the case $p > 1$ first. For any weighted mean matrix $A$ generated by $(\lambda_n)$
   with $\lambda_1>0$, Theorem \ref{thm1}
implies that for any non-increasing sequence ${\bf x}$, $||A{\bf
x}||_p \geq \lambda ||{\bf x}||_p$ with
\begin{eqnarray*}
   \lambda^p &=& \inf_{r}
   r^{-1}\sum^{\infty}_{j=1}(\sum^{\min (r,j)}_{k=1}\frac {\lambda_{k}}{\Lambda_j})^p=1+\inf_{r}
   r^{-1}\sum^{\infty}_{j=r+1}\Big(\frac {\Lambda_{r}}{\Lambda_j} \Big )^p \\
    &=& 1+\inf_{r}
   \sum^{\infty}_{k=1}r^{-1} \sum^{r}_{i=1}\Big(\frac
   {\Lambda_{r}}{\Lambda_{kr+i}}\Big)^p.
\end{eqnarray*}
   To show the infimum is achieved at $r=1$, it suffices to
   show $\Lambda_r / \Lambda_{(k+1)r} \geq \Lambda_1 /
   \Lambda_{k+1}$ for any $k \geq 1$. When
   $\lambda_n=n^{\alpha}$ with $-1<\alpha \leq 0$, this is easily shown
   by induction and this completes the proof for the first assertion of the corollary.

   Now consider the case $0<p \leq 1$. Let $\Lambda_i=\sum^i_{j=1}j^{\alpha}$. Theorem \ref{thm1} and our discussions above
    imply that the best constant for the reversed inequality of \eqref{4.8} is given by
\begin{equation*}
   1+\sup_{r}
   r^{-1}\sum^{\infty}_{j=r+1}\Big(\frac {\Lambda_{r}}{\Lambda_j} \Big )^p=1+\sup_ra_r.
\end{equation*}
   The assertion of the corollary follows if we can show the sequence $(a_r)$
   is decreasing and by Lemma 7 of \cite{B2} (see the remark after that) with $x_n=\Lambda^{-p}_n$ there, it suffices to
   show $1+n(\Lambda_{n+1}/\Lambda_n)^p \leq (n+1)(\Lambda_{n+2}/\Lambda_{n+1})^p$ for $n \geq 1$
   and one can see easily that it suffices to establish this for $p=1$
   but in this case, this is given by Lemma 8 of \cite{B2} and this completes the proof.
\end{proof}

   Our next result concerns with the bounds for the
   weighted mean matrix generated by $\lambda_i=i^{\alpha}-(i-1)^{\alpha}, \alpha
   p>1$:
\begin{cor}
\label{cor4.04}
   Let ${\bf x}$ be a non-negative non-increasing sequence, $p \geq 1$,
  $\alpha >1/p$, then
\begin{equation}
\label{4.08}
  \sum^{\infty}_{j=1}\Big(\sum^j_{k=1}\frac {k^{\alpha}-(k-1)^{\alpha}}{j^{\alpha}}x_k \Big )^p
  \geq \zeta(\alpha p) ||{\bf
  x}||^p_p,
\end{equation}
   where $\zeta(x)$ denotes the Riemann zeta function and the constant is best possible. The above inequality reverses
   when $0<p \leq 1, \alpha p>1$ with the best constant $\alpha
   p/(\alpha p-1)$.
\end{cor}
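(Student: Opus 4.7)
The plan is to apply Theorem \ref{thm1} directly to the weighted mean matrix generated by $\lambda_k = k^{\alpha} - (k-1)^{\alpha}$, following the template from the proof of Corollary \ref{cor4.3}. The decisive simplification is that this choice of weights telescopes, giving $\Lambda_j = j^{\alpha}$, which makes every quantity appearing in the extremal expression \eqref{3} explicit.

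For the case $p \geq 1$, Theorem \ref{thm1} yields
\[
\lambda^p \;=\; 1 + \inf_{r \geq 1}\, r^{-1}\sum_{j = r+1}^{\infty}\Bigl(\frac{\Lambda_r}{\Lambda_j}\Bigr)^p \;=\; 1 + \inf_{r \geq 1}\, r^{\alpha p - 1}\sum_{j = r+1}^{\infty} j^{-\alpha p},
\]
which is finite since $\alpha p > 1$. To show the infimum is attained at $r = 1$, I would group the summation indices as $j = kr + i$ with $k \geq 1$, $1 \leq i \leq r$ and, just as in Corollary \ref{cor4.3}, reduce matters to the pointwise bound $\Lambda_r/\Lambda_{kr+i} \geq \Lambda_1/\Lambda_{k+1}$. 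For $\Lambda_n = n^{\alpha}$ this is the elementary inequality $(k + i/r)^{-\alpha} \geq (k+1)^{-\alpha}$, which follows from $i \leq r$. Summing over $i$ and $k$ then gives
\[
r^{-1}\sum_{j=r+1}^{\infty}\Bigl(\frac{\Lambda_r}{\Lambda_j}\Bigr)^p \;\geq\; \sum_{k=1}^{\infty}(k+1)^{-\alpha p} \;=\; \zeta(\alpha p)-1,
\]
with equality at $r = 1$, and hence $\lambda^p = \zeta(\alpha p)$.

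For the reversed case ($0 < p \leq 1$, $\alpha p > 1$), the $\inf$ is replaced by $\sup$ in the analogous expression. A uniform upper bound on the summands follows from the standard integral comparison
\[
r^{\alpha p - 1}\sum_{j=r+1}^{\infty} j^{-\alpha p} \;\leq\; r^{\alpha p - 1}\int_r^{\infty} x^{-\alpha p}\,dx \;=\; \frac{1}{\alpha p - 1},
\]
giving $\lambda^p \leq \alpha p/(\alpha p - 1)$. To see this bound is sharp, I would rewrite $r^{\alpha p - 1}\sum_{j=r+1}^{\infty} j^{-\alpha p} = r^{-1}\sum_{j\geq r+1}(j/r)^{-\alpha p}$ as a right Riemann sum and let $r \to \infty$; it converges to $\int_1^{\infty} y^{-\alpha p}\,dy = 1/(\alpha p - 1)$, so that $\lambda^p = \alpha p/(\alpha p - 1)$ is best possible.

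I do not anticipate any substantive obstacle here: once the telescoping identity $\Lambda_j = j^{\alpha}$ is in hand, the extremal value of $r$ and the explicit best constant both emerge from the grouping trick of Corollary \ref{cor4.3} (for $p \geq 1$) and from an integral comparison paired with a matching Riemann-sum limit (for $0 < p \leq 1$). The only real check is bookkeeping: confirming that summing the pointwise inequality $(k+i/r)^{-\alpha} \geq (k+1)^{-\alpha}$ in the correct order produces $\zeta(\alpha p)-1$ on the right-hand side, and that the Riemann-sum interpretation legitimately identifies the supremum in the reversed case.
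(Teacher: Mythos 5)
Your proposal is correct, and its overall frame is the one the paper uses: apply Theorem \ref{thm1} to the weighted mean matrix with $\lambda_k=k^{\alpha}-(k-1)^{\alpha}$, exploit the telescoping $\Lambda_j=j^{\alpha}$, and reduce everything to the explicit extremal expression over $r$. For $p\geq 1$ the paper merely says the argument is ``similar to Theorem 4 of \cite{B}'' and leaves it out; what you wrote (grouping $j=kr+i$ and using $(k+i/r)^{-\alpha}\geq(k+1)^{-\alpha}$, exactly as in the proof of Corollary \ref{cor4.3}) is precisely that omitted argument, so no difference there. The genuine divergence is in the $0<p\leq 1$ case: the paper identifies the supremum by first proving that $r\mapsto r^{-1}\sum_{i=1}^r(k+i/r)^{-\alpha p}$ is increasing in $r$ (citing Theorem 3A of \cite{B&J} on monotone averages), and only then passes to the limit $(k^{1-\alpha p}-(k+1)^{1-\alpha p})/(\alpha p-1)$; you instead get the uniform upper bound $r^{\alpha p-1}\sum_{j>r}j^{-\alpha p}\leq 1/(\alpha p-1)$ directly from the integral comparison for the decreasing function $x^{-\alpha p}$, and then match it with the Riemann-sum limit as $r\to\infty$. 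Your route buys independence from the Bennett--Jameson monotonicity result and is more elementary; the paper's route buys the extra structural information that the sequence in $r$ is monotone (which it reuses elsewhere in Section \ref{sec 5}), but for the purpose of evaluating the supremum your shortcut is entirely adequate.
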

\begin{proof}
   The proof for the $p \geq 1$ case can be easily obtained by applying similar ideas to that used in the proof of
Theorem 4 in \cite{B} so we shall leave it to the reader. When
$0<p \leq 1$, we note by Theorem \ref{thm1} and the proof of
Corollary \ref{cor4.3}, the best constant for the reversed
inequality of \eqref{4.08} is given by
\begin{eqnarray*}
    1+\sup_{r}
   \sum^{\infty}_{k=1}r^{-1} \sum^{r}_{i=1}\Big(\frac
   {\Lambda_{r}}{\Lambda_{kr+i}}\Big)^p=1+\sup_{r}
   \sum^{\infty}_{k=1}\sum^{r}_{i=1}\frac {(k+i/r)^{-\alpha p}}{r}.
\end{eqnarray*}
  It follows from Theorem 3A of \cite{B&J} that the term inside the first sum of the last expression above is increasing with
  $r$, and it is easy to see that as $r \rightarrow +\infty$, it
  approaches the value $(k^{1-\alpha p}-(k+1)^{1-\alpha
  p})/(\alpha p-1)$ and this completes the proof.
\end{proof}

    It is an open problem to determine the lower bounds of the weighted mean matrices generated by $\lambda_n=n^{\alpha}, 0<\alpha<1$
    acting on non-increasing non-negative sequences in $l^p$ when $p \geq
    1$. In connection to this, Bennett \cite[p. 65]{B2} asked to
    determine the monotonicity of the following sequence for $p>1,
    (1+\alpha)p>1$, when $\Lambda_n=\sum^n_{i=1}i^{\alpha}$:
\begin{equation}
\label{5.2}
    \frac {\Lambda^p_n}{n}\sum_{k>n}\Lambda^{-p}_k.
\end{equation}
    The following condition is sufficient
    for the above sequence to be increasing, given by \cite[Theorem 3]{B2} (see also \cite[Theorem
    8]{R&S}):
\begin{equation}
\label{4.7}
    1+n\Big (\frac {\Lambda_{n+1}}{\Lambda_n}\Big )^p-(n+1)\Big(\frac {\Lambda_{n+2}}{\Lambda_{n+1}}\Big
    )^p \geq 0.
\end{equation}
    Suppose the above condition is satisfied, then we deduce from
    it that
\begin{equation*}
   n\Big (\frac {\Lambda_{n+1}}{\Lambda_n}\Big )^p \geq (n+1)\Big(\frac {\Lambda_{n+2}}{\Lambda_{n+1}}\Big
    )^p-1 \geq (n+2)\Big(\frac {\Lambda_{n+3}}{\Lambda_{n+2}}\Big
    )^p-2 \geq \ldots \geq (n+k)\Big(\frac {\Lambda_{n+k+1}}{\Lambda_{n+k}}\Big
    )^p-k,
\end{equation*}
    for any $n, k \geq 1$. When
    $\Lambda_n=\sum^n_{i=1}i^{\alpha}$, we note by the Euler-Maclaurin
    formula, one easily finds that for $\alpha >0$,
\begin{equation}
\label{5.3}
   \sum^n_{i=1}i^{\alpha}=\frac {n^{\alpha+1}}{1+\alpha}+\frac
   {n^{\alpha}}{2}+O(1+n^{\alpha-1}).
\end{equation}
    We then deduce from this that when
    $\Lambda_n=\sum^n_{i=1}i^{\alpha}, \alpha>0$,
\begin{equation*}
    \lim_{k \rightarrow +\infty}(n+k)\Big(\frac {\Lambda_{n+k+1}}{\Lambda_{n+k}}\Big
    )^p-k=n+(1+\alpha)p.
\end{equation*}
    It follows from this that we have
\begin{equation*}
    \frac {\Lambda_{n+1}}{\Lambda_n} \geq (1+\frac
    {(1+\alpha)p}{n})^{1/p}.
\end{equation*}
    Using Taylor expansion and \eqref{5.3} again, we find that in
    order for the above inequality to hold, it is necessary to
    have $p \geq 2/(1+\alpha)$. It is therefore interesting to
    ask whether the above inequality holds or not for
    $p=2/(1+\alpha)$ and this in fact is known, as we have the
    following
\begin{lemma}
\label{lem4}
   For $0 \leq \alpha \leq 1$ or $\alpha \geq 3$, we have
   for $k \geq n \geq 1$,
\begin{equation}
\label{4.22}
   \frac {\sum^n_{i=1}i^{\alpha}}{\sum^k_{i=1}i^{\alpha}} \leq
   \Big ( \frac {n(n+1)}{k(k+1)} \Big )^{\frac {\alpha+1}{2}}.
\end{equation}
   The above inequality reverses when $1 \leq \alpha \leq 3$. In
   particular, we have for $0 \leq \alpha \leq 1$ or $\alpha \geq 3$,
\begin{equation}
\label{4.25}
   \sum^n_{i=1}i^{\alpha} \leq  \frac {(n(n+1))^{\frac
   {\alpha+1}{2}}}{\alpha+1}.
\end{equation}
   The above inequality reverses when $1 \leq \alpha \leq 3$.
\end{lemma}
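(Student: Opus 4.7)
My plan is to deduce \eqref{4.25} from \eqref{4.22} and focus on the stronger ratio inequality. The inequality \eqref{4.22} is equivalent to the assertion that the sequence
\[
  b_k := \frac{\sum_{i=1}^k i^{\alpha}}{\bigl(k(k+1)\bigr)^{(\alpha+1)/2}}
\]
is non-decreasing in $k$ when $0 \leq \alpha \leq 1$ or $\alpha \geq 3$, and non-increasing in $k$ when $1 \leq \alpha \leq 3$. Once this monotonicity is established, \eqref{4.25} follows by letting $k \to \infty$, using $\sum_{i=1}^{k} i^{\alpha} \sim k^{\alpha+1}/(\alpha+1)$ and $(k(k+1))^{(\alpha+1)/2} \sim k^{\alpha+1}$, which forces $b_k \to 1/(\alpha+1)$.

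After clearing denominators and cancelling a common factor of $(k+1)^{(\alpha+1)/2}$, the one-step monotonicity $b_k \leq b_{k+1}$ becomes equivalent to
\[
  \sum_{i=1}^k i^{\alpha} \leq U_k, \qquad U_k := \frac{(k+1)^{\alpha}\, k^{(\alpha+1)/2}}{(k+2)^{(\alpha+1)/2} - k^{(\alpha+1)/2}},
\]
with the reverse inequality required for $1 \leq \alpha \leq 3$. I plan to prove $\sum_{i=1}^k i^{\alpha} \leq U_k$ by induction on $k$. The base case $k=1$ reduces to $3^{(\alpha+1)/2} \leq 2^{\alpha} + 1$, which holds with equality at $\alpha = 1$ and $\alpha = 3$; the correct sign pattern on each of the three intervals follows from the convexity of $\alpha \mapsto \log(2^{\alpha} + 1)$, which forces $\alpha \mapsto \log((2^{\alpha}+1)/3^{(\alpha+1)/2})$ to be convex and hence to have at most two zeros on $[0, \infty)$. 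For the inductive step, combining $\sum_{i=1}^{k-1} i^{\alpha} \leq U_{k-1}$ with $\sum_{i=1}^k i^{\alpha} = \sum_{i=1}^{k-1} i^{\alpha} + k^{\alpha}$ reduces the task to showing $U_{k-1} + k^{\alpha} \leq U_k$.

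Writing $\beta = (\alpha+1)/2$, the inductive step $U_{k-1} + k^{\alpha} \leq U_k$ simplifies to
\[
  \frac{(k+2)^{\beta} - k^{\beta}}{(k+1)^{\beta} - (k-1)^{\beta}} \leq \Bigl(\frac{k+1}{k}\Bigr)^{\beta-1},
\]
for $\beta \in [1/2, 1] \cup [2, \infty)$, with reversal for $\beta \in [1, 2]$ and equality at $\beta \in \{1, 2\}$. This is where I expect the main obstacle to lie: the inequality is tight at two interior values of $\beta$, so no single convexity argument suffices. My approach is to rewrite both sides as integrals via $(k+2)^{\beta} - k^{\beta} = \int_{k-1}^{k+1} \beta (u+1)^{\beta-1}\,du$ and $(k+1)^{\beta} - (k-1)^{\beta} = \int_{k-1}^{k+1} \beta u^{\beta-1}\,du$, which reduces the claim to
\[
  \frac{\int_{k-1}^{k+1} (u+1)^{\beta-1}\,du}{\int_{k-1}^{k+1} u^{\beta-1}\,du} \leq \Bigl(\frac{k+1}{k}\Bigr)^{\beta-1}.
\]
The left-hand side is a $u^{\beta-1}\,du$-weighted average of $((u+1)/u)^{\beta-1}$ over $[k-1, k+1]$, and the right-hand side is its value at the midpoint $u = k$; the sign of the difference is then controlled by the interplay between the weight (which is increasing for $\beta > 1$ and decreasing for $\beta < 1$) and the curvature of $((u+1)/u)^{\beta-1}$, both changing type at $\beta = 1$ and $\beta = 2$ and producing the lemma's sign pattern. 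If this direct analysis proves too delicate, I would fall back on a Bennett--Jameson-type monotonicity result, as invoked earlier in the paper for $A_n(f_{\alpha,p})$, by reformulating $b_k$ as an appropriate average.
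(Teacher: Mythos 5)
Your route is, up to packaging, the paper's own: the deduction of \eqref{4.25} from \eqref{4.22} by letting $k\to\infty$, the reduction to one-step monotonicity of $b_k$, and the resulting bound $\sum_{i=1}^k i^{\alpha}\le U_k$ proved by telescoping are exactly what the paper does (it cites Lemma 3.1 of \cite{G7} in place of your explicit induction), and your key displayed inequality in $\beta$ is literally the paper's \eqref{4.23}, i.e.\ $f(k+1)\le f(k)$ for $f(x)=\big((x+1)^{\beta}-(x-1)^{\beta}\big)/x^{\beta-1}$. Your base case via convexity of $\alpha\mapsto\log(2^{\alpha}+1)$ is correct and is a tidy way to get the sign pattern on all three ranges at once.

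The gap is precisely at the step you flag as the main obstacle: the inequality $\int_{k-1}^{k+1}(u+1)^{\beta-1}du\,\big/\int_{k-1}^{k+1}u^{\beta-1}du\le((k+1)/k)^{\beta-1}$ is never proved, and the mechanism you sketch cannot produce the required sign pattern. Writing $\phi(u)=((u+1)/u)^{\beta-1}$ and $w(u)=u^{\beta-1}$, one checks that $\phi''(u)=(\beta-1)\phi(u)\,(1+1/u)^{-2}u^{-4}(\beta+2u)$, so the convexity of $\phi$ and the monotonicity of both $\phi$ and $w$ all change type only at $\beta=1$; none of the ingredients you invoke sees the transition at $\beta=2$, where the inequality nevertheless reverses. (Concretely, for $\beta\ge2$ the weight is increasing while $\phi$ is decreasing and convex, so Chebyshev's integral inequality pushes the weighted average below the plain average while Jensen pushes the plain average above $\phi(k)$, and the two effects conflict.) The fix is the paper's: do not argue globally, but pair $u=k+t$ with $u=k-t$ and divide by $(k+1)^{\beta-1}$, which reduces the claim to the pointwise statement $g\big(t/(k+1)\big)\le g\big(t/k\big)$ for $g(s)=(1+s)^{\beta-1}+(1-s)^{\beta-1}$. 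Since $g'(s)=(\beta-1)\big((1+s)^{\beta-2}-(1-s)^{\beta-2}\big)$ has the sign of $(\beta-1)(\beta-2)$, $g$ is increasing on $(0,1)$ exactly when $\beta\le1$ or $\beta\ge2$ (i.e.\ $\alpha\le1$ or $\alpha\ge3$) and decreasing for $1\le\beta\le2$, which is the entire sign pattern of the lemma. With that one-line substitution your induction closes and the Bennett--Jameson fallback is unnecessary.
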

\begin{proof}
   This lemma is a restatement of Corollary 3.1 of \cite{G7} (note that in the statement of \cite[Corollary 3.1]{G7}, one needs to
   interchange the place of the words ``increasing" and ``decreasing"). In what follows, we shall give a simper proof. We first note that inequality \eqref{4.25} follows from the corresponding cases of \eqref{4.22} on
   letting $k \rightarrow +\infty$ in \eqref{4.22} so that it suffices to establish \eqref{4.22}.
   We shall only prove the case for $\alpha \geq 3$, the proof for the other cases are similar.
   We may assume $k =n+1$ here and by Lemma 3.1 of \cite{G7}, it
   suffices to establish \eqref{4.22} for $n=1$ as well as the
   following inequality for all $n \geq 1$:
\begin{equation}
\label{4.23}
   \frac {(n+1)^{\alpha}}{(n+2)^{\alpha}} \leq
   \frac {\Big((n+2)(n+1)\Big )^{(1+\alpha)/2}-\Big(n(n+1)\Big )^{(1+\alpha)/2}}
   {\Big((n+3)(n+2)\Big)^{(1+\alpha)/2}-\Big
   ((n+1)(n+2)\Big)^{(1+\alpha)/2}}.
\end{equation}
   The above inequality is easily seen to be equivalent to $f(n+2)
   \leq f(n+1)$ where
\begin{equation*}
    f(x)=\frac
    {(x+1)^{(1+\alpha)/2}-(x-1)^{(1+\alpha)/2}}{x^{(\alpha-1)/2}}=\frac {1+\alpha}{2}\int^1_0\Big
    ((1+t/x)^{(\alpha-1)/2}+(1-t/x)^{(\alpha-1)/2}\Big )dt.
\end{equation*}
   One shows easily the last expression above is a
   decreasing function of $x \geq 1$ when $\alpha \geq 3$ so that \eqref{4.23} holds. Moreover, the case $n=1, k=2$ of \eqref{4.22} is just
   $f(2) \leq f(1)$ and this completes the proof.
\end{proof}

   The above lemma implies that (in combining the arguments given in \cite[Theorem 3]{B2} or \cite[Theorem
    8]{R&S}) the sequence defined in \eqref{5.2} for $\Lambda_n=\sum^n_{i=1}i^{\alpha}$ is increasing for $n$ large enough
    when $0 \leq \alpha \leq 1$ or $\alpha \geq 3$, as long as $p \geq 2/(1+\alpha)$ and it is decreasing for $n$ large enough
    when $1 \leq \alpha \leq 3$, as long as $1/(1+\alpha)<p \leq 2/(1+\alpha)$. It's also shown in \cite{B2} that the sequence is increasing
    for $\alpha \geq 1, p \geq 1$ and decreasing for $0<\alpha
    \leq 1, 1/(1+\alpha) < p \leq 1$. In what follows, we shall give an extension of this result. But we first need a few lemmas:
\begin{lemma}
\label{lem5}
   For $1 \leq \alpha \leq 3$, we have
\begin{equation}
\label{5.15}
    \sum^n_{i=1}i^{\alpha} \geq \frac {1}{1+\alpha}\frac
    {4n^2(n+1)^{\alpha}}{4n+1+\alpha}.
\end{equation}
   The above inequality reverses when $\alpha \geq 3$.
\end{lemma}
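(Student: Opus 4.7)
The plan is to establish both assertions by induction on $n$, reducing the inductive step to a single pointwise inequality in $\alpha$ that becomes equality at $\alpha=3$, and then settling that inequality by a concavity argument.

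Set $S_n=\sum_{i=1}^{n}i^{\alpha}$ and $T_n=\frac{4n^{2}(n+1)^{\alpha}}{(1+\alpha)(4n+1+\alpha)}$, with the convention $T_0:=0$. Since $S_n-S_{n-1}=n^{\alpha}$, by induction on $n$ the claim $S_n\geq T_n$ for $1\leq\alpha\leq 3$ (respectively $S_n\leq T_n$ for $\alpha\geq 3$) reduces to the single pointwise inequality
\[
   n^{\alpha}\geq T_n-T_{n-1}\qquad(n\geq 1),
\]
with the inequality reversed in the second case. A direct computation using the identity $(1+\alpha)(4n-3+\alpha)+4(n-1)^{2}=(2n-1+\alpha)^{2}$ recasts this as
\[
   \frac{n^{\alpha}(2n-1+\alpha)^{2}}{4n-3+\alpha}\geq\frac{4n^{2}(n+1)^{\alpha}}{4n+1+\alpha}
\]
(reversed for $\alpha\geq 3$). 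Taking logarithms, set
\[
   R(\alpha)=2\log(2n-1+\alpha)+\log(4n+1+\alpha)-\log(4n-3+\alpha)-\log(4n^{2})-\alpha\log\tfrac{n+1}{n}.
\]
A straightforward verification yields $R(3)=0$, while $R(1)=\log\frac{n(2n+1)}{(n+1)(2n-1)}>0$ since $n(2n+1)-(n+1)(2n-1)=1$.

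The crux of the argument is the concavity claim $R''(\alpha)<0$ for all $\alpha\geq 1$ and $n\geq 1$. Differentiating twice and combining the last two pole terms via the identity $\frac{1}{(4n-3+\alpha)^{2}}-\frac{1}{(4n+1+\alpha)^{2}}=\frac{8(4n-1+\alpha)}{[(4n+1+\alpha)(4n-3+\alpha)]^{2}}$ gives
\[
   R''(\alpha)=-\frac{2}{(2n-1+\alpha)^{2}}+\frac{8(4n-1+\alpha)}{[(4n+1+\alpha)(4n-3+\alpha)]^{2}}.
\]
Setting $t=4n-1+\alpha\geq 4$ (so that $2n-1+\alpha=t-2n$ and $(4n+1+\alpha)(4n-3+\alpha)=t^{2}-4$) and using $n\geq 1$ to bound $(t-2n)^{2}\leq(t-2)^{2}$, the required inequality $R''<0$ reduces to $4t(t-2)^{2}<(t-2)^{2}(t+2)^{2}$, i.e.\ $4t<(t+2)^{2}$, which is immediate from $(t+2)^{2}-4t=t^{2}+4>0$.

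With $R$ strictly concave on $[1,\infty)$, $R(1)>0$, and $R(3)=0$ in hand, both conclusions follow. On $[1,3]$, concavity places $R$ above the chord from $(1,R(1))$ to $(3,0)$, giving $R(\alpha)\geq\tfrac{3-\alpha}{2}R(1)\geq 0$. For $\alpha\geq 3$, the descent from $R(1)>0$ to $R(3)=0$ yields, via the mean value theorem, some $\xi\in(1,3)$ with $R'(\xi)<0$; concavity then forces $R'(\alpha)\leq R'(\xi)<0$ for all $\alpha\geq\xi$, so $R$ is strictly decreasing past $\xi$ and in particular $R(\alpha)\leq R(3)=0$ on $[3,\infty)$. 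I expect the technical crux to be verifying the concavity of $R$, but, as outlined, the substitution $t=4n-1+\alpha$ collapses it to the elementary observation $t^{2}+4>0$.
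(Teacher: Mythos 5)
Your proof is correct, and it takes a genuinely different route from the paper's. You prove the inequality by telescoping: with $T_n=\tfrac{4n^2(n+1)^{\alpha}}{(1+\alpha)(4n+1+\alpha)}$ you reduce everything to the step inequality $n^{\alpha}\geq T_n-T_{n-1}$, which the identity $(1+\alpha)(4n-3+\alpha)+4(n-1)^2=(2n-1+\alpha)^2$ turns into a clean ratio inequality; the logarithm $R(\alpha)$ then satisfies $R(3)=0$, $R(1)=\log\tfrac{n(2n+1)}{(n+1)(2n-1)}>0$, and $R''<0$ (your reduction via $t=4n-1+\alpha$ to $t^2+4>0$ checks out, as do the chord and mean-value arguments that settle the two ranges of $\alpha$ simultaneously). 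The paper instead deduces the lemma from the already-established estimate \eqref{5.12} (a consequence of Lemma \ref{lem4}, which rests on results of \cite{G7}): it derives the lower bound \eqref{5.13} for $\Lambda_n$ and then shows that bound dominates the right-hand side of \eqref{5.15}, reducing to the scalar inequality $1+(1+\alpha)x+\tfrac{(1+\alpha)^2x^2}{4}-(1+2x)^{(1+\alpha)/2}\geq 0$. Your argument is self-contained and handles both directions of the lemma in one stroke through the concavity of $R$ with the pivot at $\alpha=3$; the paper's route is shorter given its prior machinery and has the side benefit of producing the intermediate bounds \eqref{5.12}--\eqref{5.13}, which are reused several times later in Section 5, whereas your proof yields the lemma alone.
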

\begin{proof}
   We only give the proof for the case $1 \leq \alpha \leq 3$ and the proof for the other case is similar.
   It follows from \eqref{4.22} with $k=n+1$ that we have for $1 \leq \alpha
   \leq 3$,
\begin{equation}
\label{5.12}
   \frac {\sum^n_{i=1}i^{\alpha}}{\sum^{n+1}_{i=1}i^{\alpha}} \geq
   \Big ( \frac {n}{n+2} \Big )^{\frac {\alpha+1}{2}}.
\end{equation}

    We deduce from this that for $1 \leq \alpha \leq 3$,
\begin{equation}
\label{5.13}
    \sum^n_{i=1}i^{\alpha} \geq \frac
    {n^{(1+\alpha)/2}(n+1)^{\alpha}}{(n+2)^{(1+\alpha)/2}-n^{(1+\alpha)/2}}.
\end{equation}
     It suffices to show the right-hand side expression above is
     no less than the right-hand side expression of \eqref{5.15}.
     One easily sees that this follows from the following
     inequality for $1 \leq \alpha \leq 3, 0 \leq x \leq 1$:
\begin{equation*}
     1+(1+\alpha)x+\frac {(1+\alpha)^2x^2}{4}-(1+2x)^{(1+\alpha)/2}
     \geq 0.
\end{equation*}
    The above inequality can be shown easily and this completes
    the proof.
\end{proof}

\begin{lemma}
\label{lem6}
    Let $0 \leq x \leq 1$, the following inequality holds when $1
    \leq \alpha \leq 3$:
\begin{equation}
\label{5.14}
   \Big((1+x)^{2-\alpha}(1+2x)^{(\alpha-1)/2}-1\Big)\Big((1+2x)^{(1+\alpha)/2}-1 \Big )-(1+\alpha)x^2 \geq 0.
\end{equation}
   The above inequality reverses when $\alpha \geq 3$.
\end{lemma}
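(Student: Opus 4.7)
The plan is to first simplify inequality \eqref{5.14} via the substitution $t = x/(1+x)$. Under this change of variable one has $1+x = 1/(1-t)$, $1+2x = (1+t)/(1-t)$, $x = t/(1-t)$, and $t$ runs over $[0, 1/2]$ as $x$ runs over $[0,1]$. Multiplying the first bracket of \eqref{5.14} by $(1-t)^{(3-\alpha)/2}$ and the second by $(1-t)^{(\alpha+1)/2}$ (together a factor of $(1-t)^{2}$, which also turns $(1+\alpha)x^{2}$ into $(1+\alpha)t^{2}$), the inequality becomes equivalent to
\begin{equation*}
\bigl[(1+t)^{(\alpha-1)/2} - (1-t)^{(3-\alpha)/2}\bigr]\bigl[(1+t)^{(\alpha+1)/2} - (1-t)^{(\alpha+1)/2}\bigr] \;\geq\; (1+\alpha)\,t^{2},
\end{equation*}
with the reverse inequality for $\alpha \geq 3$. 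The structural feature that drives the whole argument is that the two sides of this reduced form coincide identically in $t$ at $\alpha = 1$ (factors $t$ and $2t$, product $2t^{2}$) and at $\alpha = 3$ (factors $t$ and $4t$, product $4t^{2}$); these are exactly the $\alpha$-values at which the lemma flips sign.

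Introducing $p = (\alpha-1)/2$, so $p \in [0,1]$ corresponds to $\alpha \in [1,3]$, and writing $F(p, t)$ for LHS $-$ RHS of the reduced inequality, we have the boundary identities $F(0, t) = F(1, t) \equiv 0$. A direct Taylor expansion in $t$ (with $p$ fixed) shows that the coefficients of $t^{0}, t^{1}, t^{2}, t^{3}$ all vanish and the leading nontrivial contribution is $F(p, t) = \tfrac{2\,p(1-p)(p+1)}{3}\,t^{4} + O(t^{6})$, whose coefficient is $\geq 0$ on $p \in [0,1]$ and $\leq 0$ on $p \geq 1$, vanishing precisely at $p = 0$ and $p = 1$. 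This confirms the sign claimed in the lemma to leading order in $t$ uniformly in the relevant range of $\alpha$.

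To upgrade the local result to the full interval $t \in [0, 1/2]$, I would view $F$ as a function of $p$ for each fixed $t$ and combine the two endpoint identities with explicit derivative information. One computes $\partial_{p}F(0, t) = t\bigl[(3+t)\ln(1+t) + (1-t)\ln(1-t) - 2t\bigr]$ and $\partial_{p}F(1, t) = t\bigl\{(1+t)[(5+t)\ln(1+t) + (3-t)\ln(1-t)] - 2t\bigr\}$; the former is $\geq 0$ and the latter is $\leq 0$ on $[0, 1/2]$ by one-variable calculus (each expression and its first derivative vanish at $t = 0$, and the second derivative has the correct sign). The main obstacle is then to exclude interior zeros of $F(\cdot, t)$ on $(0,1)$ and to maintain the correct sign past $p = 1$; my intended approach is to rewrite $F(p, t)$ as a linear combination of exponentials in $p$ with bases $(1+t)^{2}$, $1 - t^{2}$, $(1+t)/(1-t)$ together with a linear piece $-2t^{2}p + (1 - 2t - t^{2})$, and to bound the number of real zeros by a Descartes-type argument on this sum. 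As a sanity check, the symmetric substitution $a = \sqrt{1+t}$, $b = \sqrt{1-t}$ (so $a^{2}+b^{2} = 2$, $ab = \sqrt{1-t^{2}}$) makes the midpoint case $p = 1/2$ transparent: $F(1/2, t) = 2 - t^{2} - 2\sqrt{1-t^{2}} = t^{4}/\bigl(2 - t^{2} + 2\sqrt{1-t^{2}}\bigr) \geq 0$, an identity that pins down the sign in the middle of $[0,1]$ and makes the global extension plausible.
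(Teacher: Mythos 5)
Your reduction via $t=x/(1+x)$ is correct and genuinely clarifying: the boundary identities at $\alpha=1$ and $\alpha=3$ become transparent, and your Taylor coefficient $\tfrac{2}{3}p(1-p)(p+1)t^4$ and the endpoint derivatives $\partial_pF(0,t)$, $\partial_pF(1,t)$ all check out (up to minor points: the remainder is $O(t^5)$ rather than $O(t^6)$, since the first bracket is not odd in $t$; and for $\partial_pF(1,t)$ the bracketed function has vanishing \emph{second} derivative at $t=0$ as well, so its sign analysis must start at the third derivative). But there is a genuine gap at the decisive step: nothing you have established forces $F(\cdot,t)\geq 0$ on all of $(0,1)$, or $F(\cdot,t)\leq 0$ for $p\geq 1$. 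The data you do have --- $F(0,t)=F(1,t)=0$, $\partial_pF(0,t)\geq 0$, $\partial_pF(1,t)\leq 0$, $F(1/2,t)\geq 0$, and the sign of the leading $t^4$-coefficient --- is entirely consistent with $F$ dipping negative somewhere in $(0,1/2)$ or $(1/2,1)$. The Descartes-type count you propose does not close this off as stated: writing $F(p,t)=(1+t)\big((1+t)^2\big)^p-(1-t)\big(1-t^2\big)^p-(1-t^2)\big(\tfrac{1+t}{1-t}\big)^p-2t^2p+(1-2t-t^2)$, the standard bound for exponential polynomials (sum over exponents of one plus the degree of the polynomial coefficient, minus one) gives at most $4$ real zeros counted with multiplicity, and the bad scenario (simple zeros at $0$ and $1$ plus two more in $(0,1)$) has exactly $4$. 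A sharper sign-change count would be needed, and it is complicated by the fact that the constant term $1-2t-t^2$ of the affine block changes sign at $t=\sqrt{2}-1\in(0,1/2)$, so the coefficient sign pattern is not uniform in $t$.

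The paper closes exactly this gap with a global structural fact: treating the left side of \eqref{5.14} as a function $G$ of $\alpha$, it shows $G''(\alpha)=(1+2x)^{(\alpha-1)/2}h(\alpha;x)$ with $h$ decreasing in $\alpha$, so $G''$ changes sign at most once, from $+$ to $-$; a convex-then-concave function with $G(1)=G(3)=0$ and $G'(1)\geq 0$ is automatically nonnegative on $[1,3]$ and nonpositive on $[3,\infty)$, which settles both assertions at once. To complete your version you need the analogous ingredient --- for instance, that $\partial_p^2F(p,t)$ changes sign at most once in $p$ --- rather than the endpoint and midpoint evaluations alone; your change of variables would then be a clean alternative framing, but as it stands the core of the proof is missing.
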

\begin{proof}
    We regard the left-hand side expression of
   \eqref{5.14} as a function of $\alpha$ and note
   that
its second derivative with respect to $\alpha$ equals
   $(1+2x)^{(\alpha-1)/2}h(\alpha;x)$, where
\begin{eqnarray*}
   h(\alpha;x) &=& \ln^2\Big ( \frac {1+2x}{1+x}
   \Big)(1+x)^{2-\alpha}(1+2x)^{(\alpha+1)/2}-\Big ( \frac {\ln (1+2x)}{2}
   \Big)^2(1+2x) \\
   &&-\ln^2\Big ( \frac {(1+2x)^{1/2}}{1+x}
   \Big)(1+x)^{2-\alpha}.
\end{eqnarray*}
    We again regard $h(\alpha;x)$ as a function of $\alpha$ and
    note that
\begin{equation*}
    h'(\alpha;x)=(1+x)^{2-\alpha}\ln \Big ( \frac
    {(1+2x)^{1/2}}{1+x}\Big )\Big (\ln^2\Big ( \frac {1+2x}{1+x}
   \Big)(1+2x)^{(\alpha+1)/2}+ \ln \Big ( \frac
    {(1+2x)^{1/2}}{1+x}\Big )\ln(1+x) \Big ).
\end{equation*}
   We want to show the last factor of the right-hand side
   expression above is non-negative when $\alpha \geq 1$ and it suffices to show this for
   $\alpha=1$ and in this case, this expression becomes
\begin{eqnarray*}
   &&\ln^2\Big ( \frac {1+2x}{1+x}
   \Big)(1+2x)+ \ln \Big ( \frac
    {(1+2x)^{1/2}}{1+x}\Big )\ln(1+x) \\
   & = & (1+2x)\ln^2(1+2x)-(2(1+2x)-1/2)\ln (1+2x)\ln
   (1+x)+2x\ln^2(1+x) \\
   & \geq & (1+2x)\ln^2(1+2x)-(2(1+2x)-1/2)\ln (1+2x)\ln
   (1+x)+x\ln (1+x) \ln (1+2x) \\
   &=& (1+2x)\ln (1+2x)\Big(\ln (1+2x)-3\ln (1+x)/2 \Big ) \geq 0.
\end{eqnarray*}
   It follows that $h'(\alpha;x) \leq 0$. As the left-hand side
   expression of \eqref{5.14} takes value $0$ when $\alpha=1$ and $3$, the
   assertion of the lemma follows if we can show the derivative
   with respect to $\alpha$ of the left-hand side expression of
   \eqref{5.14} is $\geq 0$ ($>0$ for $x \neq 0$) at $\alpha=1$. Calculation shows this
   is
\begin{eqnarray*}
  && \ln\Big ( \frac {1+2x}{1+x}
   \Big)(1+x)(1+2x)-\frac {\ln (1+2x)}{2}(1+2x)-\ln\Big ( \frac {(1+2x)^{1/2}}{1+x}
   \Big)(1+x)-x^2 \\
  &=& x\Big ((3/2+2x)\ln(1+2x)-2(1+x)\ln (1+x)-x \Big ).
\end{eqnarray*}
   It is easy to show the second factor in the last expression
   above is $\geq 0$ for $0 \leq x \leq 1$ ( $>0$ for $x \neq 0$) and this completes the
   proof.
\end{proof}

    Now we are ready to prove the following
\begin{theorem}
\label{thm3}
   For $1 < \alpha \leq 3$ and $1/(1+\alpha)<p \leq 1/2$, the sequence defined in \eqref{5.2} for $\Lambda_n=\sum^n_{i=1}i^{\alpha}$ is
   decreasing. For $\alpha \geq 3$ and $p \geq 1/2$, the sequence defined in \eqref{5.2} for $\Lambda_n=\sum^n_{i=1}i^{\alpha}$ is
   increasing.
\end{theorem}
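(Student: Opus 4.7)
The strategy is to verify the sufficient condition \eqref{4.7} for the increasing case $(\alpha\geq 3,\ p\geq 1/2)$, and its reversed form for the decreasing case $(1<\alpha\leq 3,\ 1/(1+\alpha)<p\leq 1/2)$. Either condition, in the appropriate direction, yields the desired monotonicity of \eqref{5.2} via the argument cited just before \eqref{4.7} (\cite[Theorem 3]{B2}, \cite[Theorem 8]{R&S}), applied in the corresponding direction.

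Setting $R_n:=\Lambda_{n+1}/\Lambda_n=1+(n+1)^\alpha/\Lambda_n$, the task reduces to proving
\[
(n+1)R_{n+1}^p-nR_n^p\leq 1\qquad(\alpha\geq 3,\ p\geq 1/2),
\]
with the reverse inequality in the other case. My first step is to insert the estimate of Lemma \ref{lem5}: for $1\leq\alpha\leq 3$ one obtains the upper bound $R_n-1=(n+1)^\alpha/\Lambda_n\leq (1+\alpha)(4n+1+\alpha)/(4n^2)$, while for $\alpha\geq 3$ the reversed (lower) bound holds; the analogous bounds apply to $R_{n+1}$.

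The second step is a Bernoulli/convexity reduction keyed to the placement of $p$ relative to $1/2$. Writing $R_n^p=(\sqrt{R_n})^{2p}$, the map $t\mapsto t^{2p}$ is concave for $2p\leq 1$ and convex for $2p\geq 1$, so the tangent-line inequality at $t=1$, combined with the estimates from Step 1, upper- or lower-bounds $R_n^p$ in the correct direction and eliminates the exponent $p$ from the problem. The third step is to recognize the resulting algebraic inequality in $n$ and $\alpha$ as an instance of Lemma \ref{lem6} under the substitution $x=1/(2n+1)$ (or an equivalent natural choice such as $x=1/n$), so that the factors $(1+x)^{2-\alpha}(1+2x)^{(\alpha-1)/2}$ and $(1+2x)^{(1+\alpha)/2}$ encode precisely the $\alpha$-power ratios $(n+1)/n,\ (n+2)/n$ that surface from the $\Lambda_n$-estimates. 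The sign reversal of Lemma \ref{lem6} at $\alpha=3$ then matches exactly the sign reversal needed between the two cases of the theorem.

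The main obstacle is Step 2. Lemma \ref{lem6} is an \emph{equality} at both $\alpha=1$ and $\alpha=3$, and Lemma \ref{lem5} is likewise sharp to leading order; hence any slack in the Bernoulli-type reduction would destroy the argument at the boundary values of $\alpha$ and at the boundary $p=1/2$. Choosing the Bernoulli inequality precisely so that the constants $(1+\alpha)(4n+1+\alpha)/(4n^2)$ from Lemma \ref{lem5} line up with the $(1+\alpha)x^2$ on the right-hand side of \eqref{5.14} is the delicate calibration that the careful formulations of Lemmas \ref{lem5} and \ref{lem6} have been designed to permit.
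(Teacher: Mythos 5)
Your high-level plan --- reduce via \cite[Lemma 7]{B2} to checking \eqref{4.7} (or its reverse) at the single exponent $p=1/2$, and then feed in Lemmas \ref{lem5} and \ref{lem6} --- is the same skeleton the paper uses, and the target inequality $(n+1)R_{n+1}^{p}-nR_n^{p}\le 1$ is correctly identified. The gap is in how you propose to use the two lemmas. In that inequality $R_n=\Lambda_{n+1}/\Lambda_n$ and $R_{n+1}$ occur with \emph{opposite} signs, so you need a lower bound on one ratio and an upper bound on the other, whereas Lemma \ref{lem5} supplies a bound in only one direction for each range of $\alpha$: for $1\le\alpha\le 3$ it gives $\Lambda_n\ge\frac{4n^2(n+1)^{\alpha}}{(1+\alpha)(4n+1+\alpha)}$, i.e.\ only an upper bound on $R_n-1$, and ``the analogous bound for $R_{n+1}$'' is again an upper bound --- the wrong direction for the term $(n+1)R_{n+1}^p$ in the decreasing case (and symmetrically in the increasing case). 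The tangent-line (Bernoulli) step inherits the same sign problem, since concavity of $t\mapsto t^{2p}$ bounds both $R_n^p$ and $R_{n+1}^p$ from the same side, and it is in any case superfluous once the reduction to $p=1/2$ has been made. So as written the substitutions in your Steps 1--2 cannot produce an inequality of the correct orientation.

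What the paper does instead is keep the two ratios coupled. Writing \eqref{4.7} at $p=1/2$ in the form \eqref{5.9}, namely $2n(\Lambda_{n+1}/\Lambda_n)^{1/2}\le D_n+2n$ with $D_n=\frac{(n+2)^{\alpha}(n+1)^2}{\Lambda_{n+1}}-\frac{(n+1)^{\alpha}n^2}{\Lambda_n}$, it first proves the intermediate estimate $D_n\ge 1+\alpha$ (inequality \eqref{5.10}); this is done by regarding $\Lambda_n$ as a free variable in \eqref{5.11}, observing convexity with critical point $n(n+1)^{\alpha}/\sqrt{1+\alpha}$, and then substituting the coupled bounds \eqref{5.12} and \eqref{5.13}, at which point Lemma \ref{lem6} (with $x=1/n$) is exactly what is needed. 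Only after $D_n$ has been replaced by the constant $1+\alpha$ does one square $2n\sqrt{R_n}\le 2n+1+\alpha$, and this final step is precisely Lemma \ref{lem5}. Your proposal is missing this decoupling device --- the intermediate inequality \eqref{5.10} and the convexity-in-$\Lambda_n$ argument that makes Lemma \ref{lem6} applicable --- and without it the pieces you have assembled do not fit together.
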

\begin{proof}
   We only prove the case for $1 \leq \alpha \leq 3$ here and the proof for the case $\alpha \geq 3$ is
   similar. We only point out that in the $\alpha \geq 3$ case,
   one needs to use the fact (which is easy to show) that the right-hand side expression of
   \eqref{5.12} is no greater than $n(n+1)^{\alpha}/(1+\alpha)$ (and hence $\leq
   n(n+1)^{\alpha}/\sqrt{1+\alpha}$). Now we return to the proof
   of our assertion for $1 \leq \alpha \leq 3$ and by the remark after Lemma 7 of \cite{B2} (with
$x_n=\Lambda^{-p}_{n}$ there), it suffices to prove the reversed
inequality of
   \eqref{4.7} for $p=1/2$, which is equivalent to
\begin{equation}
\label{5.9}
  2n \Big(\frac {\Lambda_{n+1}}{\Lambda_n} \Big)^{1/2} \leq \frac
  {(n+2)^{\alpha}(n+1)^2}{\Lambda_{n+1}}-\frac
  {(n+1)^{\alpha}n^2}{\Lambda_{n}}+2n.
\end{equation}

   We now show for $1 \leq \alpha \leq 3$, we have
\begin{equation}
\label{5.10}
 \frac
  {(n+2)^{\alpha}(n+1)^2}{\Lambda_{n+1}}-\frac
  {(n+1)^{\alpha}n^2}{\Lambda_{n}} \geq 1+\alpha.
\end{equation}
   We recast this as
\begin{equation}
\label{5.11}
   (n+2)^{\alpha}(n+1)^2-(n+1)^{\alpha}n^2 \geq
(1+\alpha)(n+1)^{\alpha}+(1+\alpha)\Lambda_n+\frac
  {(n+1)^{2\alpha}n^2}{\Lambda_{n}}.
\end{equation}
   We now regard $\Lambda_n$ as a variable on the right-hand side
   expression above and it is easy to see this is a convex
   function with the unique critical point being
   $n(n+1)^{\alpha}/\sqrt{1+\alpha}$. Note that we have
   $\sum^{n}_{i=1}i^{\alpha} \leq n(n+1)^{\alpha}/(1+\alpha)$ for
   $\alpha \geq 1$ (this follows from \cite[Lemma 8]{B2}).
   It follows that it suffices to establish \eqref{5.11} with $\Lambda_n$
   replaced by the lower bound given in \eqref{5.13}. Equivalently, we can then multiply both sides of
   \eqref{5.10} by $\Lambda_n$ and in the resulting expression replace the values of $\Lambda_n/\Lambda_{n+1}$ and $\Lambda_n$ by the values given
   by the right-hand side expressions of \eqref{5.12} and \eqref{5.13} respectively. Then after some simplifications and on setting
   $x=1/n$, we see that inequality \eqref{5.10}
   is a consequence of inequality \eqref{5.14} for $0 \leq x \leq
   1$. Substituting \eqref{5.10} in \eqref{5.9} and squaring both sides, we
    find that it suffices to show \eqref{5.15} and Lemma
    \ref{lem5} now leads to the assertion of the theorem.
\end{proof}

   We now apply our results above to prove the
following
\begin{theorem}
\label{thm4}
   Let ${\bf x}$ be a non-negative non-increasing sequence, $0< p  \leq 1$,
  $\alpha \geq 3, (\alpha+1)p>2$, then
\begin{equation}
\label{4.24}
  \sum^{\infty}_{j=1}\Big(\sum^j_{k=1}\frac {k^{\alpha}}{\sum^j_{i=1}i^{\alpha}}x_k \Big )^p
  \leq \frac {(1+\alpha)p}{(1+\alpha)p-1} ||{\bf
  x}||^p_p.
\end{equation}
   The constant is best possible. The above inequality also holds
   when $1 < \alpha \leq 3, 1/(1+\alpha) < p \leq 1/2$ with the best possible
   constant $\sum^{\infty}_{j=1}\Big( \sum^j_{i=1}i^{\alpha} \Big
   )^{-p}$.
\end{theorem}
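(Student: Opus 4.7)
The plan is to mirror the strategy used for Corollary \ref{cor4.3}, combined with the monotonicity results of Theorem \ref{thm3}. First, Theorem \ref{thm1} applied (in its reversed form for $0<p\leq 1$, with $q=p$) to the weighted mean matrix $a_{j,k}=k^{\alpha}/\Lambda_j$ identifies the best constant $C$ in \eqref{4.24} as
\[
  C = \sup_{r\geq 1} r^{-1}\sum_{j=1}^{\infty}\Big(\sum_{k=1}^{\min(r,j)}\frac{k^{\alpha}}{\Lambda_j}\Big)^{p},
\]
where $\Lambda_j=\sum_{i=1}^{j}i^{\alpha}$. Splitting the outer sum at $j=r$ exactly as in the proof of Corollary \ref{cor4.3} (the $r$ terms with $j\leq r$ contribute $r$ and the tail contributes $\Lambda_r^{p}\sum_{j>r}\Lambda_j^{-p}$) reduces this to
\[
  C = 1 + \sup_{r\geq 1}\frac{\Lambda_r^{p}}{r}\sum_{k>r}\Lambda_k^{-p},
\]
so the problem collapses to locating the extremum of the very sequence studied in \eqref{5.2}.

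For the second case, $1<\alpha\leq 3$ with $1/(1+\alpha)<p\leq 1/2$, Theorem \ref{thm3} says this sequence is decreasing, so the supremum is attained at $r=1$; using $\Lambda_1=1$ we obtain $C = 1 + \sum_{k\geq 2}\Lambda_k^{-p} = \sum_{j\geq 1}\Lambda_j^{-p}$, matching the stated constant. For the first case, $\alpha\geq 3$ with $(\alpha+1)p>2$, Theorem \ref{thm3} gives the opposite conclusion, so the supremum equals the limit as $r\to\infty$. Using the Euler--Maclaurin expansion \eqref{5.3}, one has $\Lambda_r^{p}\sim r^{(1+\alpha)p}/(1+\alpha)^{p}$ and, since $(1+\alpha)p>1$, $\sum_{k>r}\Lambda_k^{-p}\sim (1+\alpha)^{p}\,r^{1-(1+\alpha)p}/((1+\alpha)p-1)$, whence the limit is $1/((1+\alpha)p-1)$ and $C=(1+\alpha)p/((1+\alpha)p-1)$.

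Best-possibleness is automatic from the equality discussion in Theorem \ref{thm1}: in the second case, the extremal sequence $x_1=1$, $x_k=0$ for $k\geq 2$, realises equality; in the first case, since the supremum is only asymptotic, one tests with $x_k=1$ for $k\leq s$ and $x_k=0$ for $k>s$ and lets $s\to\infty$ to see that $C$ cannot be improved. The main technical point is the asymptotic evaluation of $r^{-1}\Lambda_r^{p}\sum_{k>r}\Lambda_k^{-p}$ in the first case; because the hypothesis $(1+\alpha)p>2$ keeps us comfortably above the convergence threshold $(1+\alpha)p=1$, the error terms coming from \eqref{5.3} contribute only subleading corrections, and the limit is a routine consequence of comparison with $\sum k^{-(1+\alpha)p}$.
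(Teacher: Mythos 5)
Your treatment of the second case ($1<\alpha\leq 3$, $1/(1+\alpha)<p\leq 1/2$) is correct and is exactly what the paper does: Theorem \ref{thm1} reduces the best constant to $1+\sup_r r^{-1}\Lambda_r^p\sum_{j>r}\Lambda_j^{-p}$, and Theorem \ref{thm3} shows this sequence is decreasing on precisely that parameter range, so the supremum sits at $r=1$ and equals $\sum_{j\geq 1}\Lambda_j^{-p}$.

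The first case, however, has a genuine gap. You assert that for $\alpha\geq 3$ and $(\alpha+1)p>2$ ``Theorem \ref{thm3} gives the opposite conclusion,'' i.e.\ that the sequence in \eqref{5.2} is increasing. But Theorem \ref{thm3} only establishes this for $\alpha\geq 3$ \emph{and} $p\geq 1/2$, whereas the hypothesis $(\alpha+1)p>2$ only forces $p>2/(1+\alpha)$, which is strictly less than $1/2$ as soon as $\alpha>3$. For example, $\alpha=7$, $p=0.3$ satisfies the hypotheses of the theorem but not those of Theorem \ref{thm3}, and without monotonicity you cannot conclude that $\sup_r$ equals $\lim_{r\to\infty}$ — the supremum could be attained at a finite $r$ and exceed the limit, so your asymptotic evaluation only produces a lower bound for the best constant, not the required upper bound. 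The paper avoids this by a sandwich argument: Lemma \ref{lem4} (inequality \eqref{4.22}, valid for $\alpha\geq 3$) gives
\begin{equation*}
   r^{-1}\sum_{j>r}\Big(\frac{\Lambda_{r,\alpha}}{\Lambda_{j,\alpha}}\Big)^p
   \leq r^{-1}\sum_{j>r}\Big(\frac{\Lambda_{r,1}}{\Lambda_{j,1}}\Big)^{(1+\alpha)p/2}=b_r,
\end{equation*}
and for the sequence $(b_r)$ (where $\Lambda_{j,1}=j(j+1)/2$ and the exponent $(1+\alpha)p/2$ exceeds $1$) the monotonicity criterion of \cite[Lemma 7]{B2} reduces to the case $(1+\alpha)p=2$, where it is an identity; hence $\sup_r b_r=\lim_r b_r=1/((1+\alpha)p-1)$, giving the upper bound $(1+\alpha)p/((1+\alpha)p-1)$. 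The matching lower bound comes from $\Lambda_{r,\alpha}/\Lambda_{j,\alpha}\geq (r/j)^{1+\alpha}$ together with Corollary \ref{cor4.04}. Your Euler--Maclaurin computation of the limit is correct, and your proof does go through in the subrange $p\geq 1/2$, but to cover the full first case you need to replace the appeal to Theorem \ref{thm3} by a comparison argument of this kind.
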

\begin{proof}
   The second assertion of the theorem is a direct consequence of Theorems \ref{thm1} and \ref{thm3}.
   To prove the first assertion of the theorem, we let $\Lambda_{n, \alpha}=\sum^n_{i=1}i^{\alpha}$ and Theorem
\ref{thm1} implies that the best constant in
   \eqref{4.24} is given by
\begin{equation*}
   1+\sup_{r}
   r^{-1}\sum^{\infty}_{j=r+1}\Big(\frac {\Lambda_{r, \alpha}}{\Lambda_{j, \alpha}} \Big
   )^p \leq 1+\sup_{r}
   r^{-1}\sum^{\infty}_{j=r+1}\Big(\frac {\Lambda_{r,1}}{\Lambda_{j,1}} \Big
   )^{\frac {(1+\alpha)p}{2}}=1+\sup_rb_r,
\end{equation*}
   by Lemma \ref{lem4}. We want to show $(b_r)$ is increasing and by Lemma 7 of \cite{B2} with $x_n=\Lambda^{-(1+\alpha)p/2}_{n,1}$ there, it suffices to
   show $1+n(\Lambda_{n+1,1}/\Lambda_{n,1})^{(1+\alpha)p/2} \geq (n+1)(\Lambda_{n+2,1}/\Lambda_{n+1,1})^{(1+\alpha)p/2}$ for $n \geq 1$
   and one sees easily that it suffices to establish this for
   $(1+\alpha)p=2$, in which case the inequality becomes an
   identity. It follows that $\sup_rb_r=\lim_{r \rightarrow
   +\infty}b_r$ and we note that
\begin{equation*}
   b_r=\sum^{\infty}_{k=1}\frac {1}{r}\sum^r_{i=1}\Big(\frac {r(r+1)}{(kr+i)(kr+i+1)} \Big
   )^{\frac {(1+\alpha)p}{2}}=\sum^{\infty}_{k=1}\frac {1}{r}\sum^r_{i=1}\Big(
   (k+i/r)^{-(1+\alpha)p}+O(1/r)\Big
   ).
\end{equation*}
   It follows that as $r \rightarrow +\infty$, the inner sum of
   the last expression above approaches the value $(k^{1-(\alpha+1)
   p}-(k+1)^{1-(\alpha+1)
  p})/((1+\alpha)p-1)$ so that $\lim_{r \rightarrow
  +\infty}b_r=1/((1+\alpha)p-1)$. We then deduce that the best constant in
   \eqref{4.24} is $\leq (1+\alpha)p/((1+\alpha)p-1)$.
   On the other hand, the first inequality of \cite[(1.3)]{G7} implies that
   $\Lambda_{r, \alpha}/\Lambda_{j, \alpha} \geq (r/j)^{1+\alpha}$ when $j \geq r$
   so that Corollary \ref{cor4.04} implies that the best constant in
   \eqref{4.24} is $\geq (1+\alpha)p/((1+\alpha)p-1)$. This now
   completes the proof.
\end{proof}

   We now return to the question of determining the monotonicity of the sequence given in \eqref{4.7}
   for $\Lambda_n=\sum^n_{i=1}i^{\alpha}$ and note that the most
   interesting case here is $0<\alpha<1<p$ (see \cite[p. 65]{B2}),
   in view of the connection to the open problem of determining
   the lower bounds of the weighted mean matrices generated by $\lambda_n=n^{\alpha}, 0<\alpha<1$
    acting on non-increasing non-negative sequences in $l^p$ when $p \geq
    1$. In what follows, we shall give a partial solution to this
    and we point out here that we have not tried to optimize the
    choice of the auxiliary function appearing in the proof of
    Theorem \ref{thm5} and one may be able to obtain better lower bounds for
    $\alpha$ appearing in Theorem  \ref{thm5} as well as Corollary
    \ref{cor5.3}.

    We now prove a few lemmas:
\begin{lemma}
\label{lem7}
    Let $\Lambda_n=\sum^n_{i=1}i^{\alpha}$. For $0.14 \leq \alpha \leq
   1$, $n \geq 1$, we have
\begin{equation}
\label{5.18}
   \frac {n(n+1)^{2\alpha}}{\Lambda^2_n}-\frac
   {(n+1)(n+2)^{2\alpha}}{\Lambda^2_{n+1}}-\frac {0.94(1+\alpha)}{(n+1)^2}
   \geq 0.
\end{equation}
\end{lemma}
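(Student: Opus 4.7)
The plan is to use the bounds on $\Lambda_n$ from Lemma \ref{lem4} to eliminate $\Lambda_n$ from \eqref{5.18}, thereby reducing matters to an elementary two-variable inequality in $n$ and $\alpha$.

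First, applying inequality \eqref{4.22} with $k=n+1$ gives $\Lambda_{n+1}^2 \geq ((n+2)/n)^{1+\alpha}\Lambda_n^2$, so that
\[
\frac{(n+1)(n+2)^{2\alpha}}{\Lambda_{n+1}^2} \leq \frac{(n+1)\,n^{\alpha+1}(n+2)^{\alpha-1}}{\Lambda_n^2}.
\]
Substituting into the left-hand side of \eqref{5.18}, it suffices to establish $[n(n+1)^{2\alpha}-(n+1)n^{\alpha+1}(n+2)^{\alpha-1}]/\Lambda_n^2 \geq 0.94(1+\alpha)/(n+1)^2$. I would then upper bound $\Lambda_n^2$ by $(n(n+1))^{\alpha+1}/(1+\alpha)^2$ using \eqref{4.25}; after some cancellation this reduces the problem to the purely algebraic inequality
\[
(n+1)\Big[(1+\tfrac{1}{n})^\alpha - (1-\tfrac{1}{n+2})^{1-\alpha}\Big] \geq \frac{0.94}{1+\alpha}.
\]

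The bound \eqref{4.25} is loose at $n=1$ (where $\Lambda_1 = 1$), and indeed the reduced inequality fails at $n=1$ for $\alpha$ near $0.14$. I would therefore treat $n = 1$ as a separate case of \eqref{5.18}, verifying directly that
\[
2^{2\alpha} - \frac{2 \cdot 3^{2\alpha}}{(1+2^\alpha)^2} \geq \frac{0.94(1+\alpha)}{4}
\]
for $\alpha \in [0.14, 1]$; the margin here is sizable and the verification is routine. For $n \geq 2$, I would prove the reduced inequality by showing its left-hand side is non-decreasing in $n$, so that it suffices to treat $n = 2$, where the inequality becomes the one-variable statement
\[
3\,(3/2)^\alpha - (9/4)(4/3)^\alpha \geq \frac{0.94}{1+\alpha}, \quad \alpha \in [0.14, 1],
\]
which can be verified by elementary analysis on the given interval (convexity of the left-hand side plus endpoint checks).

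The main obstacle will be the tight corner $(n, \alpha) = (2, 0.14)$, where the reduced inequality has only a narrow margin: the constant $0.94$ and the lower endpoint $\alpha = 0.14$ appear calibrated so that this corner just satisfies the bound. Consequently, both the monotonicity-in-$n$ argument for the reduced inequality and the endpoint verification at $n = 2$ must be carried out carefully to preserve enough slack, and this is what I expect to be the delicate part of the proof.
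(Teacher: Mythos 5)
Your reduction follows the same skeleton as the paper's: the first step (eliminating $\Lambda_{n+1}$ via \eqref{4.22} with $k=n+1$, i.e.\ the reversed form of \eqref{5.12}) is identical, and the separate treatment of $n=1$ matches the paper too. The difference is in how $\Lambda_n$ is then removed: you use the global bound \eqref{4.25}, arriving at the clean two-variable inequality
\begin{equation*}
F(n,\alpha):=(n+1)\Big[(1+\tfrac1n)^{\alpha}-\big(\tfrac{n+1}{n+2}\big)^{1-\alpha}\Big]\ \geq\ \frac{0.94}{1+\alpha},
\end{equation*}
whereas the paper uses the sharper $n$-dependent upper bound (the reversed \eqref{5.13}) and then grinds the result down, via Hadamard's inequality and a power-mean step, to the linear inequality \eqref{5.22} in the continuous variable $x=1/n\in(0,1/2]$. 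One small point you should make explicit: substituting an upper bound for $\Lambda_n^2$ is only legitimate because the coefficient $n(n+1)^{2\alpha}-(n+1)n^{\alpha+1}(n+2)^{\alpha-1}$ is positive (it is, but it needs a line).

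The genuine gap is your claim that the reduced left-hand side is non-decreasing in $n$; this is false for the larger $\alpha$ in your range. At $\alpha=1$ one has $F(n,1)=(n+1)\big[(1+\tfrac1n)-1\big]=\tfrac{n+1}{n}$, strictly decreasing, and numerically the same happens for all $\alpha\gtrsim0.4$ (e.g.\ $F(2,0.4)\approx1.004>F(3,0.4)\approx0.989$), with the infimum over $n$ then attained in the interior or at $n=\infty$ rather than at $n=2$. So the reduction to the single case $n=2$ does not go through as stated. The target inequality $F(n,\alpha)\geq0.94/(1+\alpha)$ does appear to hold for all $n\geq2$, $0.14\leq\alpha\leq1$ (for $\alpha\geq0.4$ one has $F>0.98$ while the right side is below $0.68$, so there is ample slack), so the argument is repairable --- for instance by splitting off $\alpha\geq1/2$, or by proving the inequality uniformly in $x=1/n\in(0,1/2]$, which is in effect what the paper does and why it never needs any monotonicity in $n$. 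Your identification of $(n,\alpha)=(2,0.14)$ as the tight corner is correct (the margin there is about one percent), and the $n=2$ endpoint verification is indeed routine, though the right mechanism is that the left side is increasing in $\alpha$ while $0.94/(1+\alpha)$ is decreasing; convexity of the left side alone does not reduce matters to endpoint checks, since the right side is convex as well.
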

\begin{proof}
    We first prove inequality \eqref{5.18} holds when $n=1$ for all $0 \leq \alpha \leq 1$.
    In fact we shall prove the following stronger inequality:
\begin{equation*}
    2^{2\alpha}-\frac {2\cdot 3^{2\alpha}}{(1+2^{\alpha})^2}-\frac
    {1+\alpha}{2} \geq 0.
\end{equation*}
    Now using the bound $1+2^{\alpha} \geq 2^{1+\alpha/2}$, we see
    that the above inequality is a consequence of the following
    inequality:
\begin{equation*}
   2^{2\alpha+1}-(9/2)^{\alpha}-1-\alpha \geq 0.
\end{equation*}
   It is easy to show that the left-hand side expression above, as
   a function of $\alpha$, $0 \leq \alpha \leq 1$, is convex and increasing and as it takes the value $0$ at $\alpha=0$, this
   completes the proof for the case $n=1$ of \eqref{5.18}.

   Now we assume $n \geq 2$ and note that the reversed inequalities \eqref{5.12} and \eqref{5.13}
are still valid when $0 \leq \alpha \leq 1$ and it is easy to see,
using the reversed inequality of \eqref{5.13} for $0 \leq \alpha
\leq 1$, that the left-hand side expression of \eqref{5.18} is a
decreasing function of $\Lambda_n$ and hence it suffices to
establish \eqref{5.18} on multiplying both sides of
   \eqref{5.16} by $\Lambda^2_n$ and in the resulting expression replacing the values of $\Lambda_n/\Lambda_{n+1}$ and $\Lambda_n$ by the values given
   by the right-hand side expressions of \eqref{5.12} and \eqref{5.13}
   respectively. We are now led to show the following inequality:
\begin{equation*}
   n(n+1)^{2\alpha} \geq (n+1)(n+2)^{2\alpha}\Big ( \frac
   {n}{n+2} \Big )^{1+\alpha}+\frac
   {0.94(1+\alpha)}{(n+1)^2}n^{1+\alpha}(n+1)^{2\alpha}\Big ( (n+2)^{(\alpha+1)/2}-n^{(\alpha+1)/2} \Big
   )^{-2}.
\end{equation*}
    After some simplifications and on setting
   $x=1/n$, we can recast the above inequality as
\begin{equation}
\label{5.19}
   1 \geq (1+x)^{1-2\alpha}(1+2x)^{\alpha-1}+\frac
   {0.94(1+\alpha)x^3}{(1+x)^2}\Big ( (1+2x)^{(\alpha+1)/2}-1 \Big
   )^{-2}.
\end{equation}
   By Hadamard's inequality, which asserts for a continuous convex function $h(u)$ on $[a, b]$,
\begin{equation*}
    \frac {1}{b-a}\int^b_ah(u)du \geq h(\frac {a+b}{2}),
\end{equation*}
   we see that
\begin{equation*}
    (1+2x)^{(\alpha+1)/2}-1 = \frac
    {1+\alpha}{2}\int^{2x}_0(1+t)^{(\alpha-1)/2}dt \geq
    (1+\alpha)x(1+x)^{(\alpha-1)/2}.
\end{equation*}
    It suffices to prove \eqref{5.19} with $(1+2x)^{(\alpha+1)/2}-1
    $ replaced by this lower bound above which leads to the
    following inequality (with $x=1/n$) for $0 \leq x \leq 1/2$:
\begin{equation}
\label{5.21}
    1 \geq \frac {1+x}{1+2x}\Big ( \frac{1+2x}{(1+x)^2} \Big )^{\alpha}+\frac
   {0.94x(1+x)^{-1-\alpha}}{(1+\alpha)}.
\end{equation}
    Note that we have
\begin{eqnarray*}
   && \frac {1+x}{1+2x}\Big ( \frac{1+2x}{(1+x)^2} \Big )^{\alpha}+\frac
   {0.94x(1+x)^{-1-\alpha}}{(1+\alpha)} \\
   &=& \frac {1+x}{1+2x}\Big ( \frac{1+2x}{(1+x)^2} \Big )^{\alpha}+\frac {x}{1+2x}\Big (\Big (\frac
   {0.94(1+2x)}{(1+\alpha)(1+x)^{1+\alpha}} \Big )^{1/\alpha} \Big )^{\alpha} \\
   & \leq & \Big ( \frac {1+x}{1+2x} \cdot \frac{1+2x}{(1+x)^2}+ \frac {x}{1+2x} \cdot \Big (\frac
   {0.94(1+2x)}{(1+\alpha)(1+x)^{1+\alpha}} \Big )^{1/\alpha} \Big )^{\alpha}.
\end{eqnarray*}
    Hence it suffices to show the last expression above is $\leq
    1$, which is equivalent to showing for $\alpha \geq 0.14, 0 \leq x \leq 1/2$,
\begin{equation}
\label{5.22}
   (1+\alpha)(1+x)-0.94(1+2x)^{1-a} \geq 0.
\end{equation}
    To see this, observe that the left-hand side expression above
    is an increasing function of $\alpha$, hence it suffices to
    check the above inequality for $\alpha=0.14$, in which case we also observe that the left-hand side
    expression above is a convex function of $x$ and its derivative at $x=1/2$ is negative. It follows that
    one only needs to check the case when $x=1/2$ and one checks
    easily that \eqref{5.22} holds in this case. This now establishes inequality \eqref{5.21} and hence completes the
   proof.
\end{proof}

\begin{lemma}
\label{lem8}
   Let $\Lambda_n=\sum^n_{i=1}i^{\alpha}$. For $0.14 \leq \alpha \leq
   1$, $n \geq 1$, we have
\begin{equation}
\label{5.16}
   \frac {2n(n+1)^{\alpha}}{\Lambda_n}-\frac
   {2(n+1)(n+2)^{\alpha}}{\Lambda_{n+1}}+\frac {0.94(1+\alpha)}{(n+1)^2}
   \geq 0.
\end{equation}
\end{lemma}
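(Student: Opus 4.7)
The plan is to mirror the structure of the proof of Lemma~\ref{lem7}: handle $n=1$ directly, then for $n\geq 2$ reduce the inequality to a one-variable inequality in $x=1/n\in(0,1/2]$ by invoking the reversed forms of \eqref{5.12} and \eqref{5.13}, which are valid for $0\leq\alpha\leq 1$.

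For $n=1$, substituting $\Lambda_1=1$ and $\Lambda_2=1+2^{\alpha}$ reduces \eqref{5.16} to
\[2\cdot 2^{\alpha}-\frac{4\cdot 3^{\alpha}}{1+2^{\alpha}}+\frac{0.94(1+\alpha)}{4}\geq 0\qquad\text{for } 0.14\leq\alpha\leq 1,\]
which can be verified (as in Lemma~\ref{lem7}) by using $1+2^{\alpha}\geq 2^{1+\alpha/2}$ together with convexity/monotonicity of the resulting one-variable expression in $\alpha$.

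For $n\geq 2$, view the left-hand side of \eqref{5.16} as a function $g(L)$ of $L=\Lambda_n$, with $\Lambda_{n+1}$ replaced by $L+(n+1)^{\alpha}$. A direct computation gives
\[g'(L)=-\frac{2n(n+1)^{\alpha}}{L^{2}}+\frac{2(n+1)(n+2)^{\alpha}}{(L+(n+1)^{\alpha})^{2}},\]
so $g'(L)\leq 0$ is equivalent to $L\leq(n+1)^{\alpha}/\bigl(\sqrt{(n+1)/n}\cdot((n+2)/(n+1))^{\alpha/2}-1\bigr)$. The reversed form of \eqref{5.13} gives the upper bound
\[\Lambda_n\leq B_n:=\frac{n^{(\alpha+1)/2}(n+1)^{\alpha}}{(n+2)^{(\alpha+1)/2}-n^{(\alpha+1)/2}},\]
and a short calculation reduces the verification that $B_n$ itself lies below the above threshold to the trivial inequality $(n/(n+1))^{\alpha/2}\leq((n+2)/(n+1))^{1/2}$. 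Therefore $g$ is decreasing on $[\Lambda_n,B_n]$ and it suffices to show $g(B_n)\geq 0$.

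Substituting $L=B_n$ and setting $x=1/n$, after some algebra the desired inequality reduces to
\[\frac{2\bigl[(1+2x)^{(\alpha+1)/2}-1\bigr]\bigl[1-(1+x)^{1-\alpha}(1+2x)^{(\alpha-1)/2}\bigr]}{x}+\frac{0.94(1+\alpha)x^{2}}{(1+x)^{2}}\geq 0\]
for $0\leq x\leq 1/2$ and $0.14\leq\alpha\leq 1$. The second bracket is nonpositive, so the first summand is negative and must be dominated by the positive correction; this is the main obstacle, and it is what pins down the specific constants $0.94$ and $0.14$. I would attack it by using $(1+t)^{\beta}\leq 1+\beta t$ (valid for $\beta\in[0,1]$ and $t\geq 0$) to obtain $(1+2x)^{(\alpha+1)/2}-1\leq(1+\alpha)x$ and $((1+x)^{2}/(1+2x))^{(1-\alpha)/2}-1\leq(1-\alpha)x^{2}/(2(1+2x))$, reducing the problem to $(1-\alpha)(1+x)^{2}\leq 0.94(1+2x)$. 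Since this linearization is mildly lossy near $x=1/2$, one then sharpens one of the two intermediate bounds by retaining a second-order correction, in the spirit of the final step of the proof of Lemma~\ref{lem7}, to recover the exact threshold $\alpha\geq 0.14$.
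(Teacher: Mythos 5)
Your setup is sound and, up to the last display, coincides with what the paper does: the $n=1$ case is handled identically, and your monotonicity step (differentiating in $L=\Lambda_n$ directly and checking that the bound $B_n$ from the reversed \eqref{5.13} lies below the critical threshold) is a legitimate, in fact slightly cleaner, variant of the paper's argument, which instead multiplies \eqref{5.16} by $\Lambda_{n+1}$, treats the result as a convex function of $\Lambda_n$, and must then verify the auxiliary inequality \eqref{5.17} to locate the critical point. Your reduced one-variable inequality in $x=1/n$ is exactly equivalent to the paper's. The problem is the final step. Bounding $(1+2x)^{(\alpha+1)/2}-1\leq(1+\alpha)x$ \emph{and} $\bigl((1+x)^2/(1+2x)\bigr)^{(1-\alpha)/2}-1\leq(1-\alpha)x^2/(2(1+2x))$ reduces the claim to $0.94(1+2x)\geq(1-\alpha)(1+x)^2$, and at $x=1/2$ (i.e.\ $n=2$) with $\alpha=0.14$ this reads $1.88\geq 0.86\cdot 2.25=1.935$, which is false; the linearization only closes for $\alpha\gtrsim 0.164$ when $n=2$. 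You flag this yourself, but the ``second-order correction'' that is supposed to recover the threshold $0.14$ is precisely the delicate part of the lemma and is left undone, so the proof is incomplete exactly where the constants $0.94$ and $0.14$ are determined.

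For comparison, the paper avoids the second linearization entirely: it only replaces the denominator $(1+2x)^{(1+\alpha)/2}-1$ by its upper bound $(1+\alpha)x$ (which weakens the positive term), leaving the inequality in the form $h_{\alpha}(t)\geq 0$ with
\begin{equation*}
   h_{\alpha}(t)=2-2(1-t)^{(\alpha-1)/2}+0.94t,\qquad t=\frac{x^2}{(1+x)^2}\in[0,1/9].
\end{equation*}
Since $h_{\alpha}$ is concave in $t$, it suffices to check $h_{\alpha}(0)=0$ and $h_{\alpha}(1/9)\geq 0$, and the latter is what produces the threshold $\alpha\geq 1-2\ln(1+0.94/18)/\ln(9/8)<0.14$. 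If you want to salvage your version, replacing your second linearization by this exact treatment of the factor $1-(1+x)^{1-\alpha}(1+2x)^{(\alpha-1)/2}=1-(1-t)^{(\alpha-1)/2}$ is the missing ingredient.
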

\begin{proof}
    We first prove inequality \eqref{5.16} holds when $n=1$ for all $0 \leq \alpha \leq
    1$, in which case the inequality becomes
\begin{equation*}
    2^{\alpha+1}-\frac {4\cdot 3^{\alpha}}{1+2^{\alpha}}+\frac
    {0.94(1+\alpha)}{4} \geq 0.
\end{equation*}
    Now using the bound $1+2^{\alpha} \geq 2^{1+\alpha/2}$, we see
    that the above inequality is a consequence of the following
    inequality:
\begin{equation*}
   2^{\alpha+1}-2\cdot(3/\sqrt{2})^{\alpha}+\frac
    {0.94(1+\alpha)}{4} \geq 0.
\end{equation*}
   It is easy to show that the left-hand side expression above, as
   a function of $\alpha$, $0 \leq \alpha \leq 1$, is concave so that it suffices to check its values at $\alpha=0$ and $\alpha=1$, in both
   cases the above inequality can be verified easily and this
   completes the proof for the case $n=1$ of \eqref{5.16}.

   Now assume $n \geq 2$ and we recast inequality \eqref{5.16} as
\begin{equation*}
   \frac {2n(n+1)^{2\alpha}}{\Lambda_n}+\frac {0.94(1+\alpha)}{(n+1)^2}\Lambda_n+2n(n+1)^{\alpha}-2(n+1)(n+2)^{\alpha}+\frac
   {0.94(1+\alpha)}{(n+1)^2}(n+1)^{\alpha}
   \geq 0.
\end{equation*}

   We now regard $\Lambda_n$ as a variable on the left-hand side
   expression above and it is easy to see this is a convex
   function with the unique critical point being
   $\sqrt{(2n)}(n+1)^{\alpha+1}/\sqrt{0.94(1+\alpha)}$. Note that the
   reversed inequalities \eqref{5.12} and \eqref{5.13} are still valid when
   $0 \leq \alpha \leq 1$ and we want to show first that the upper
   bound given in the reversed inequality in \eqref{5.13} for
   $\Lambda_n$ is no greater than
   $\sqrt{(2n)}(n+1)^{\alpha+1}/\sqrt{0.94(1+\alpha)}$. In fact it
   suffices to show it is no greater than
$\sqrt{2}n(n+1)^{\alpha+1/2}/\sqrt{1+\alpha}$, which is
   equivalent to showing the following inequality
\begin{equation}
\label{5.17}
   n^{(\alpha-1)/2} \leq \frac
   {\sqrt{2}}{\sqrt{1+\alpha}}(n+1)^{1/2}\Big((n+2)^{(\alpha+1)/2}-n^{(\alpha+1)/2}\Big ).
\end{equation}
    Note that it follows from the mean value theorem, we have
    $(n+2)^{(\alpha+1)/2}-n^{(\alpha+1)/2}\geq
    (1+\alpha)(n+2)^{(\alpha-1)/2}$. Using this in \eqref{5.17},
    we see that it remains to show
\begin{equation*}
   (1+2/n)^{(1-\alpha)/2} \leq \sqrt{2(1+\alpha)}(n+1)^{1/2}.
\end{equation*}
   But we have $(1+2/n)^{(1-\alpha)/2} \leq (1+2/n)^{1/2} \leq \sqrt{3}$ and on
   the other hand, we have $\sqrt{2(1+\alpha)}(n+1)^{1/2} \geq
   \sqrt{2}(1+1)^{1/2}=2$ so \eqref{5.17} holds. This being given,
   it follows from our discussions above that in order for
   \eqref{5.16} to hold, it suffices to multiply both sides of
   \eqref{5.16} by $\Lambda_n$ and in the resulting expression replace the values of $\Lambda_n/\Lambda_{n+1}$ and $\Lambda_n$ by the values given
   by the right-hand side expressions of \eqref{5.12} and \eqref{5.13}
   respectively. Then after some simplifications and on setting
   $x=1/n$, we see that it suffices to show for $0 \leq x \leq
   1/2$,
\begin{equation*}
    2-2(1+x)^{1-\alpha}(1+2x)^{(\alpha-1)/2}+\frac
    {0.94(1+\alpha)x^3}{(1+x)^2}\Big ((1+2x)^{(1+\alpha)/2}-1 \Big )^{-1}
    \geq 0.
\end{equation*}
   By the mean value theorem again, we see that
   $(1+2x)^{(1+\alpha)/2}-1 \leq (1+\alpha)x$. Replacing this in
   the above inequality, we see that it suffices to show
   $h_{\alpha}(x^2/(1+x)^2) \geq 0$, where
\begin{equation*}
    h_{\alpha}(t) =2-2(1-t)^{(\alpha-1)/2}+0.94t.
\end{equation*}
    As $h_{\alpha}(t)$ is a concave function of $t$, and note that $x^2/(1+x)^2 \leq 1/9$, in order for $h_{\alpha}(x^2/(1+x)^2) \geq 0$,
    it suffices to check $h_{\alpha}(0) \geq 0$ and $h_{\alpha}(1/9) \geq 0$. This leads to the condition $\alpha \geq 1-2\ln (1+0.94/18)/\ln (9/8) < 0.14$.
    This now completes the proof.
\end{proof}

    Now we are ready to prove the following
\begin{theorem}
\label{thm5}
   For $0.14 \leq \alpha \leq 1 $ and $p \geq 2$, the sequence defined in \eqref{5.2} for $\Lambda_n=\sum^n_{i=1}i^{\alpha}$ is
   increasing.
\end{theorem}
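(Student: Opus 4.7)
The plan mirrors the strategy used for Theorem~\ref{thm3}: first collapse the whole range $p\geq 2$ to a single boundary case, then dispose of that boundary case by combining Lemmas~\ref{lem7} and~\ref{lem8}.

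The first step is to apply the remark after Lemma~7 of \cite{B2} with $x_n=\Lambda_n^{-p}$, just as was done in the proof of Theorem~\ref{thm3}. Since the relevant quantity is convex in $p$ through the exponent, this reduces monotonicity of the sequence in \eqref{5.2} for every $p\geq 2$ to verifying \eqref{4.7} in the single critical case $p=2$, namely
\begin{equation*}
   1+n\Big(\frac{\Lambda_{n+1}}{\Lambda_n}\Big)^{2}-(n+1)\Big(\frac{\Lambda_{n+2}}{\Lambda_{n+1}}\Big)^{2}\;\geq\;0\qquad (n\geq 1).
\end{equation*}

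The second step uses $\Lambda_{n+1}=\Lambda_n+(n+1)^{\alpha}$ and $\Lambda_{n+2}=\Lambda_{n+1}+(n+2)^{\alpha}$ to expand the two squares. After the constants $1+n-(n+1)$ cancel, the displayed inequality becomes
\begin{equation*}
   \Big(\frac{2n(n+1)^{\alpha}}{\Lambda_n}-\frac{2(n+1)(n+2)^{\alpha}}{\Lambda_{n+1}}\Big)+\Big(\frac{n(n+1)^{2\alpha}}{\Lambda_n^{2}}-\frac{(n+1)(n+2)^{2\alpha}}{\Lambda_{n+1}^{2}}\Big)\;\geq\;0.
\end{equation*}
The two bracketed differences are precisely the quantities estimated in \eqref{5.16} and \eqref{5.18}, except that each lemma carries a correction term $\pm 0.94(1+\alpha)/(n+1)^{2}$, with opposite signs. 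Adding the two inequalities therefore makes the correction terms cancel exactly and produces the required $p=2$ instance of \eqref{4.7}.

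The substantive work has thus already been absorbed into Lemmas~\ref{lem7} and~\ref{lem8}: the constant $0.94$ and the threshold $\alpha\geq 0.14$ were engineered so that both perturbed inequalities hold simultaneously and the cross-term cancellation produces a valid (rather than merely approximate) bound at $p=2$. The only step whose validity I should double-check carefully is the reduction via the remark after Lemma~7 of \cite{B2}, to make sure it applies in the direction needed for an \emph{increasing} sequence at the boundary $p=2$; but this is the exact analogue of what was used for the decreasing case at $p=1/2$ in Theorem~\ref{thm3}, so no new obstacle is expected.
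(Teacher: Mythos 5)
Your proposal is correct and follows essentially the same route as the paper: reduce to the boundary case $p=2$ of \eqref{4.7} via Lemma~7 of \cite{B2}, expand the squares, and observe that the resulting inequality is exactly the sum of \eqref{5.16} and \eqref{5.18}, whose $\pm 0.94(1+\alpha)/(n+1)^2$ correction terms cancel. Nothing is missing.
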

\begin{proof}
   By Lemma 7 of \cite{B2} (with
$x_n=\Lambda^{-p}_{n}$ there), it suffices to prove inequality of
   \eqref{4.7} for $p=2$, which is
\begin{equation*}
  1+n\Big (1+\frac {(n+1)^{\alpha}}{\Lambda_n} \Big )^2-(n+1)\Big (1+\frac {(n+2)^{\alpha}}{\Lambda_{n+1}} \Big
  )^2 \geq 0.
\end{equation*}
   Expanding the squares, we can recast the above inequality as
\begin{equation*}
  \frac {2n(n+1)^{\alpha}}{\Lambda_n}-\frac
   {2(n+1)(n+2)^{\alpha}}{\Lambda_{n+1}}+\frac {n(n+1)^{2\alpha}}{\Lambda^2_n}-\frac
   {(n+1)(n+2)^{2\alpha}}{\Lambda^2_{n+1}} \geq 0.
\end{equation*}
   The assertion of the theorem now follows by combining Lemma
   \ref{lem7} and Lemma \ref{lem8}.
\end{proof}

   We will now show the sequence defined in \eqref{5.2} for
$\Lambda_n=\sum^n_{i=1}i^{\alpha}$ is increasing for all $0 \leq
\alpha \leq 1$, provided $p$ is large enough. We first need two
lemmas:
\begin{lemma}
\label{lem9}
    For $n \geq 1, 0 \leq \alpha
    \leq 1$ and $p \geq 1$, the function
\begin{equation*}
   f_n(x)=1+n\Big(1+\frac {(n+1)^{\alpha}}{x}\Big )^p-(n+1)\Big(1+\frac
   {(n+2)^{\alpha}}{(n+1)^{\alpha}+x}\Big )^p
\end{equation*}
    is a decreasing function for $x \leq \frac
    {n^{(1+\alpha)/2}(n+1)^{\alpha}}{(n+2)^{(1+\alpha)/2}-n^{(1+\alpha)/2}}$.
\end{lemma}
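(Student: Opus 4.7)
My plan is to show $f_n'(x)\le 0$ by computing the derivative directly, then to reduce the resulting inequality to two simpler ones that are each verified from the stated upper bound on $x$.

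Write $A=(n+1)^\alpha$, $B=(n+2)^\alpha$, $F(x)=1+A/x$, $G(x)=1+B/(x+A)$, so that $f_n(x)=1+nF(x)^p-(n+1)G(x)^p$. A direct differentiation gives
\begin{equation*}
   f_n'(x)=-\frac{npA}{x^2}F(x)^{p-1}+\frac{(n+1)pB}{(x+A)^2}G(x)^{p-1},
\end{equation*}
so $f_n'(x)\le 0$ is equivalent to
\begin{equation*}
   nA(x+A)^2\,F(x)^{p-1}\;\ge\;(n+1)Bx^2\,G(x)^{p-1}.
\end{equation*}

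The key idea is to split this into two factors: (I) $F(x)\ge G(x)$, which together with $F,G>1$ and $p\ge 1$ gives $F(x)^{p-1}\ge G(x)^{p-1}$; and (II) the $p=1$ instance $nA(x+A)^2\ge(n+1)Bx^2$. Multiplying (I) and (II) yields the desired inequality. A quick calculation shows that (I) is equivalent to $x\le A^2/(B-A)$, and (II) to $x\le\sqrt{nA}\,A/(\sqrt{(n+1)B}-\sqrt{nA})$, the denominator being positive since $(n+1)(n+2)^\alpha>n(n+1)^\alpha$ for $0\le\alpha\le 1$.

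It remains to check that the hypothesized upper bound $U=n^{(1+\alpha)/2}(n+1)^\alpha/((n+2)^{(1+\alpha)/2}-n^{(1+\alpha)/2})$ is at most both thresholds. Rewriting $U=(n+1)^\alpha/(((n+2)/n)^{(1+\alpha)/2}-1)$, the comparison for (I) reduces to $((n+2)/(n+1))^\alpha\le((n+2)/n)^{(1+\alpha)/2}$, which holds because $(n+2)/(n+1)\le(n+2)/n$ and $\alpha\le(1+\alpha)/2$ (using $\alpha\le 1$). The analogous reduction for (II), after dividing numerator and denominator of the threshold by $(n+1)^{\alpha/2}$, yields $(n+2)\ge n^\alpha(n+1)^{1-\alpha}$, which is immediate from weighted AM--GM: $n^\alpha(n+1)^{1-\alpha}\le\alpha n+(1-\alpha)(n+1)=n+1-\alpha\le n+2$.

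The only real obstacle is the algebraic bookkeeping in the last step---rewriting $U$ and each of the two thresholds in a form where the comparison collapses to a one-line inequality---but once the factorization of $f_n'(x)\le 0$ into (I) and (II) is in place, both reductions fall out almost mechanically.
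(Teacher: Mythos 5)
Your proposal is correct and follows essentially the same route as the paper: compute $f_n'(x)$, reduce $f_n'(x)\le 0$ to the two sub-inequalities $1+A/x\ge 1+B/(A+x)$ and $nA/x^2\ge(n+1)B/(A+x)^2$, and verify each only at the endpoint $x=U$ by monotonicity. You merely carry out explicitly the endpoint checks that the paper dismisses as "easy to prove," and those checks are done correctly.
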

\begin{proof}
    We have
\begin{equation*}
   f'_n(x)=p(n+1)\Big(1+\frac
   {(n+2)^{\alpha}}{(n+1)^{\alpha}+x}\Big )^{p-1}\frac
   {(n+2)^{\alpha}}{((n+1)^{\alpha}+x)^2}-pn\Big(1+\frac {(n+1)^{\alpha}}{x}\Big )^{p-1}\frac
   {(n+1)^{\alpha}}{x^2}.
\end{equation*}
    To show $f'_n(x) \leq 0$, it suffices to show the following
    inequalities:
\begin{eqnarray*}
   1+\frac
   {(n+2)^{\alpha}}{(n+1)^{\alpha}+x} & \leq & 1+\frac
   {(n+1)^{\alpha}}{x}, \\
  \frac
   {(n+1)(n+2)^{\alpha}}{((n+1)^{\alpha}+x)^2} & \leq & \frac
   {n(n+1)^{\alpha}}{x^2}.
\end{eqnarray*}
    It's also easy to see that one only needs to show the above
    inequalities for $x = \frac
    {n^{(1+\alpha)/2}(n+1)^{\alpha}}{(n+2)^{(1+\alpha)/2}-n^{(1+\alpha)/2}}$,
    in which case both inequalities are easy to prove and this
    completes the proof.
\end{proof}

\begin{lemma}
\label{lem10}
    For $n \geq 1, 0 \leq \alpha
    \leq 1$, we have
\begin{equation*}
   (n+1)^{\alpha}+\frac
    {n^{(1+\alpha)/2}(n+1)^{\alpha}}{(n+2)^{(1+\alpha)/2}-n^{(1+\alpha)/2}}
    \geq \frac
    {(n+1)^{(1+\alpha)/2}(n+2)^{\alpha}}{(n+3+1/n^2)^{(1+\alpha)/2}-(n+1)^{(1+\alpha)/2}}.
\end{equation*}
\end{lemma}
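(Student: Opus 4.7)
The plan is as follows. First, observe the algebraic simplification
\begin{equation*}
(n+1)^{\alpha}+\frac{n^{(1+\alpha)/2}(n+1)^{\alpha}}{(n+2)^{(1+\alpha)/2}-n^{(1+\alpha)/2}} = \frac{(n+1)^{\alpha}(n+2)^{(1+\alpha)/2}}{(n+2)^{(1+\alpha)/2}-n^{(1+\alpha)/2}}.
\end{equation*}
Setting $\gamma=(1+\alpha)/2\in[1/2,1]$, cross-multiplying, and dividing both sides by $(n+1)^{\alpha}(n+2)^{\alpha}$ reduces the assertion to the clean ratio inequality
\begin{equation*}
\frac{(n+3+1/n^2)^{\gamma}-(n+1)^{\gamma}}{(n+2)^{\gamma}-n^{\gamma}} \geq \left(\frac{n+1}{n+2}\right)^{1-\gamma}.
\end{equation*}

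To prove this, I would use the integral representation $y^{\gamma}-x^{\gamma}=\gamma\int_{x}^{y}t^{\gamma-1}\,dt$. Substituting $t=s+1$ in the numerator and splitting off the short final interval gives
\begin{equation*}
(n+3+1/n^2)^{\gamma}-(n+1)^{\gamma} = \gamma\int_{n}^{n+2}(s+1)^{\gamma-1}\,ds + \gamma\int_{n+2}^{n+2+1/n^2}(s+1)^{\gamma-1}\,ds.
\end{equation*}
Since $t^{\gamma-1}$ is decreasing for $\gamma\leq 1$, the first integral is smaller than the denominator $\gamma\int_{n}^{n+2}s^{\gamma-1}\,ds$, creating a deficit of size $O(1/n^2)$. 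The second (sliver) integral is of size at least $\gamma\,n^{-2}(n+3+1/n^2)^{\gamma-1}$, also of order $1/n^2$, and is the correction that must absorb the deficit to bring the whole ratio above $((n+1)/(n+2))^{1-\gamma}$.

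The main obstacle is precisely this matching at order $1/n^{2}$: a Taylor expansion in $1/n$ shows that both sides of the reformulated ratio inequality agree through order $1/n$, so the truth of the inequality is decided entirely by the $1/n^{2}$ corrections and the estimates have to be carried through two orders of expansion. A practical route is to verify the first few values $n=1,2,\dots$ directly (the $1/n^{2}$ term gives a comfortable margin there), and then handle $n$ large by the refined expansion. An alternative, perhaps cleaner, approach is to fix $n$ and regard the difference of the two sides as a function of $\gamma\in[1/2,1]$: at $\gamma=1$ the left side exceeds the right by the strictly positive slack $1/(2n^{2})$, and one can track monotonicity of this difference in $\gamma$ to close the argument for all $\gamma\in[1/2,1]$, i.e.\ all $\alpha\in[0,1]$.
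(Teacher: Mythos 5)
Your reformulation is correct and genuinely clarifying: the left-hand side does collapse to $(n+1)^{\alpha}(n+2)^{(1+\alpha)/2}/\bigl((n+2)^{(1+\alpha)/2}-n^{(1+\alpha)/2}\bigr)$, and after cross-multiplying and using $(1+\alpha)/2-\alpha=1-\gamma$ the lemma becomes
\begin{equation*}
\frac{(n+3+1/n^2)^{\gamma}-(n+1)^{\gamma}}{(n+2)^{\gamma}-n^{\gamma}} \geq \Bigl(\frac{n+1}{n+2}\Bigr)^{1-\gamma},
\end{equation*}
which, after clearing denominators and dividing by $n(1+1/n)^{1-\gamma}$, is exactly the inequality $f(\alpha;1/n)\geq 0$ that the paper works with. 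Your diagnosis is also accurate: the two sides agree through order $1/n$, and the leading term of the gap is $\tfrac12 n^{-2}$ uniformly in $\gamma$. But at that point the argument stops. Neither closing strategy is executed, and that is where essentially all of the work lies. The asymptotic route needs explicit remainder bounds, an explicit threshold $N$ beyond which the $\tfrac12 n^{-2}$ term dominates the $\gamma$-dependent $O(n^{-3})$ error, and a verification for each $n<N$ that is itself a one-parameter inequality over $\gamma\in[1/2,1]$; none of this is supplied.

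The monotonicity route has a more concrete defect: you propose to anchor at $\gamma=1$, where the slack is the trivially computed $1/(2n^{2})$, and to propagate from there by monotonicity in $\gamma$. But the difference of the two sides is smaller at $\gamma=1/2$ than at $\gamma=1$ (e.g.\ at $n=1$ it is about $0.306$ versus $0.5$, at $n=5$ about $0.0164$ versus $0.02$; this is consistent with the paper's proof, which shows the equivalent quantity $f(\alpha;x)$ is \emph{increasing} in $\alpha$). So the minimum over $\gamma\in[1/2,1]$ sits at the endpoint $\gamma=1/2$, i.e.\ $\alpha=0$ --- the opposite one from your anchor --- and there the inequality is not free: it reduces, with $x=1/n$, to $(1+3x+x^{3})^{1/2}(1+x)^{-1/2}+(1+2x)^{-1/2}\geq 2$, which the paper must verify separately. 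Moreover the monotonicity in the parameter is itself the bulk of the paper's proof: it requires a delicate analysis of $\partial f/\partial\alpha$ that terminates in the positivity of an explicit degree-eight polynomial with nonnegative coefficients. So the two genuinely hard steps --- proving the monotonicity and handling the correct extremal endpoint $\alpha=0$ --- are both left as placeholders, and the endpoint you would actually be forced to check is not the one you name.
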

\begin{proof}
    Let $x=1/n$, it is easy to see that we can recast the above
    inequality as $f(\alpha;x) \geq 0$ for $x=1/n$, where
\begin{equation*}
    f(\alpha;x)=(1+3x+x^3)^{(1+\alpha)/2}(1+x)^{(\alpha-1)/2}(1+2x)^{(1-\alpha)/2}-(1+x)^{\alpha}(1+2x)^{(1-\alpha)/2}-(1+2x)^{(1+\alpha)/2}+1.
\end{equation*}
     We regard $f(\alpha;x)$ as a function of $\alpha$ and note
     that
\begin{eqnarray*}
   &&(1+2x)^{-(1+\alpha)/2}f'(\alpha;x)  \\
   &=& \frac {1}{2}\ln \Big ( \frac {(1+3x+x^3)(1+x)}{(1+2x)}
   \Big) \cdot \Big (\frac {1+3x+x^3}{1+x} \Big )^{1/2} \cdot \Big ( \frac {(1+3x+x^3)(1+x)}{(1+2x)^2} \Big
   )^{\alpha/2} \\
   &&- \ln \Big ( \frac {(1+x)}{(1+2x)^{1/2}}
   \Big)\cdot \Big ( \frac {1+x}{1+2x} \Big
   )^{\alpha}-\ln (1+2x)^{1/2} \\
   & \geq & \frac {1}{2}\ln \Big ( \frac {(1+3x+x^3)(1+x)}{(1+2x)}
   \Big) \cdot \Big (\frac {1+3x+x^3}{1+x} \Big )^{1/2} \cdot \Big ( \frac {(1+3x+x^3)(1+x)}{(1+2x)^2} \Big
   )^{\alpha/2} \\
   &&- \ln \Big ( \frac {(1+x)}{(1+2x)^{1/2}}
   \Big)-\ln (1+2x)^{1/2}.
\end{eqnarray*}
    It's easy to see that when $0 \leq x \leq 1/2$, we have
    $(1+3x+x^3)(1+x) \leq (1+2x)^2$ and one verifies directly that
    when $x=1$, the last expression above is $\geq 0$ for either
    $\alpha=0,1$. Therefore, in order to show $f'(\alpha;x) \geq
    0$ for $x=1/n$, it suffices to assume $0 \leq x \leq 1/2$ and
    assume $\alpha=1$ in the last expression above. Therefore, it
     rests to show $h(x) \geq 0$ for $0 \leq x \leq 1/2$, where
\begin{equation*}
    h(x)=\frac {1}{2}\ln \Big ( \frac {(1+3x+x^3)(1+x)}{(1+2x)}
   \Big) \cdot \frac {1+3x+x^3}{1+2x} - \ln (1+x).
\end{equation*}
    Direction calculation shows that
\begin{equation*}
    \frac {2(1+2x)^2}{1+3x^2+4x^3}h'(x)=\frac
    {x(-2+x+8x^2+6x^3)}{(1+x)(1+3x^2+4x^3)}+\ln \Big ( \frac {(1+3x+x^3)(1+x)}{(1+2x)}
   \Big),
\end{equation*}
    and the derivative of the last expression above equals
    $x^2h_1(x)/((1+x)(1+2x)(1+3x+x^3)(1+3x^2+4x^3)^2)$, where
\begin{equation*}
     h_1(x)=96x^8+292x^7+436x^6+592x^5+610x^4+603x^3+511x^2+258x+56
     \geq 0.
\end{equation*}
     As it is easy to check $h'(0)=h(0)=0$, this now implies $h(x) \geq 0$ for
     $0 \leq x \leq 1/2$ and it follows that $f(\alpha;x)$ is an
     increasing function of $\alpha$ for $x=1/n$. In order to
     completes the proof, it remains to show $f(0;x) \geq 0$ and
     we recast this as
\begin{equation*}
     (1+3x+x^3)^{1/2}(1+x)^{-1/2}+(1+2x)^{-1/2} \geq 2.
\end{equation*}
    The above inequality can be verified by taking squares and
    this completes the proof.
\end{proof}

     Now we are ready to prove the following
\begin{theorem}
\label{thm6}
   For $0 \leq \alpha \leq 1 $ and $p \geq 8/(1+\alpha)$, the sequence defined in \eqref{5.2} for $\Lambda_n=\sum^n_{i=1}i^{\alpha}$ is
   increasing.
\end{theorem}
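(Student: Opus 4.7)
My plan follows the template of the proof of Theorem \ref{thm5}, but substitutes Lemmas \ref{lem9} and \ref{lem10} for the $p=2$--specific Lemmas \ref{lem7} and \ref{lem8}, and reduces the question to the endpoint $p=8/(1+\alpha)$ in place of $p=2$. By the remark after Lemma~7 of \cite{B2}, applied with $x_n=\Lambda_n^{-p}$, the claim reduces to verifying inequality \eqref{4.7} at the single critical value $p_0=8/(1+\alpha)$. Writing $q=(1+\alpha)p/2$, this choice corresponds to $q=4$, which drives all subsequent algebra.

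I would then recognize the left-hand side of \eqref{4.7} as $f_n(\Lambda_n)$, with $f_n$ the auxiliary function from Lemma \ref{lem9}. The reversed form of inequality \eqref{5.13}, which is valid for $0 \leq \alpha \leq 1$ (as used in the proof of Lemma \ref{lem8}), furnishes
\begin{equation*}
   \Lambda_n \leq \frac{n^{(1+\alpha)/2}(n+1)^\alpha}{(n+2)^{(1+\alpha)/2}-n^{(1+\alpha)/2}},
\end{equation*}
which is precisely the monotonicity threshold in Lemma \ref{lem9}. Denoting this upper bound by $x_*$, Lemma \ref{lem9} gives $f_n(\Lambda_n) \geq f_n(x_*)$, so it suffices to show $f_n(x_*) \geq 0$. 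A direct substitution yields $1+(n+1)^\alpha/x_* = ((n+2)/n)^{(1+\alpha)/2}$, whence the middle term of $f_n(x_*)$ equals $n^{1-q}(n+2)^q$. For the last term, Lemma \ref{lem10} provides the lower bound
\begin{equation*}
   (n+1)^\alpha + x_* \geq \frac{(n+1)^{(1+\alpha)/2}(n+2)^\alpha}{(n+3+1/n^2)^{(1+\alpha)/2}-(n+1)^{(1+\alpha)/2}},
\end{equation*}
which translates to $1+(n+2)^\alpha/((n+1)^\alpha+x_*) \leq ((n+3+1/n^2)/(n+1))^{(1+\alpha)/2}$. Combining these two estimates reduces the theorem to the single inequality
\begin{equation*}
   1 + n^{1-q}(n+2)^q \geq (n+1)^{1-q}(n+3+1/n^2)^q \qquad (q=4,\ n \geq 1).
\end{equation*}

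I expect the verification of this last inequality to be the main obstacle, since numerical checks suggest it holds with a margin that shrinks like $O(1/n^4)$ for large $n$, reflecting the tightness of the threshold $p \geq 8/(1+\alpha)$. After clearing denominators by multiplying through by $n^5(n+1)^3$, one obtains a polynomial inequality of total degree $12$ in which the $n^{12}$ coefficients cancel; one must then show that the resulting lower-order polynomial is non-negative for $n \geq 1$. Small cases ($n \leq 3$) admit direct substitution, while for $n \geq 4$ the $1/n^2$ correction in $n+3+1/n^2$ can be absorbed via the expansion $(n+3+1/n^2)^4 = (n+3)^4 + 4(n+3)^3/n^2 + O(1/n^4)$, reducing everything to explicit polynomial comparisons with controlled error terms. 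The remark preceding the lemmas notes that the choice of auxiliary function has not been optimized, so the threshold $8/(1+\alpha)$ may well be improvable by a sharper choice of $f_n$.
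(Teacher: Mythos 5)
Your proposal follows the paper's proof essentially verbatim: reduce \eqref{4.7} to the endpoint $q=(1+\alpha)p/2=4$, bound $\Lambda_n$ from above via the reversed inequality \eqref{5.13}, invoke the monotonicity of $f_n$ from Lemma \ref{lem9} followed by Lemma \ref{lem10}, and arrive at exactly the same final inequality $1+n^{-3}(n+2)^4 \geq (n+1)^{-3}(n+3+1/n^2)^4$. The paper dispatches that last inequality in one line by setting $x=1/n$ and factoring $(1+x)^3(x+(1+2x)^4)-(1+3x+x^3)^4=x^3(20+76x+\cdots-x^9)\geq 0$ on $(0,1]$, which is cleaner than your proposed case split; also note that your clearing factor should be $n^8(n+1)^3$ rather than $n^5(n+1)^3$ to absorb the $1/n^2$ inside the fourth power.
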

\begin{proof}
   Let $\Lambda_n=\sum^n_{i=1}i^{\alpha}$ and it suffices to show inequality
    \eqref{4.7} for $p \geq 8/(1+\alpha)$. Note that in our case
    we can recast inequality \eqref{4.7} as $f_n(\Lambda_n) \geq
    0$ where $f_n(x)$ is defined as in Lemma \ref{lem9}. It
    follows from the reversed inequality of \eqref{5.13} (note that it holds when $0 \leq \alpha \leq 1$) and Lemma \ref{lem9} that it suffices to show $f_n(\frac
    {n^{(1+\alpha)/2}(n+1)^{\alpha}}{(n+2)^{(1+\alpha)/2}-n^{(1+\alpha)/2}}) \geq
    0$. Equivalently, this is
\begin{equation*}
    1+n(\frac {n+2}{n})^{p(1+\alpha)/2}-(n+1)\Big ( 1+\frac
    {(n+2)^{\alpha}}{(n+1)^{\alpha}+\frac
    {n^{(1+\alpha)/2}(n+1)^{\alpha}}{(n+2)^{(1+\alpha)/2}-n^{(1+\alpha)/2}}}
    \Big )^p \geq 0.
\end{equation*}
    We now apply Lemma \ref{lem10} to see that it suffices to show
\begin{equation*}
    1+n(\frac {n+2}{n})^{p(1+\alpha)/2}-(n+1)( \frac
    {n+3+1/n^2}{n+1})^{p(1+\alpha)/2} \geq 0.
\end{equation*}
     As $p \geq 8/(1+\alpha)$, it suffices to prove the above inequality with $p(1+\alpha)/2$ replaced by $4$. In this case,
     on setting $x=1/n$, we can recast the above inequality as
\begin{equation*}
   (1+x)^3(x+(1+2x)^4)-(1+3x+x^3)^4=x^3(20+76x+60x^2-34x^3-20x^4-54x^5-4x^6-12x^7-x^9)\geq 0.
\end{equation*}
     This now completes the proof.
\end{proof}


    It follows readily from Theorem \ref{thm1}, Theorem
    \ref{thm5} and Theorem \ref{thm6} that we have the following
\begin{cor}
\label{cor5.3}
  Let ${\bf x}$ be a non-negative non-increasing sequence, then for $p \geq 2$,
  $0.14 \leq \alpha \leq 1$, or for $0 \leq \alpha \leq 1$, $p \geq 8/(1+\alpha)$, we have
\begin{equation*}
  \sum^{\infty}_{j=1}\Big(\sum^j_{k=1}\frac {k^{\alpha}}{\sum^j_{i=1}i^{\alpha}}x_k \Big )^p
  \geq \sum^{\infty}_{j=1}\Big( \frac {1}{\sum^j_{i=1}i^{\alpha}} \Big )^p ||{\bf
  x}||^p_p.
\end{equation*}
   The constant is best possible.
\end{cor}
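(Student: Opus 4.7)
The plan is to apply Theorem \ref{thm1} with $q=p$ to the weighted mean matrix $A=(a_{j,k})$ given by $a_{j,k}=k^\alpha/\Lambda_j$ for $1\le k\le j$ (and $0$ otherwise), where $\Lambda_j=\sum_{i=1}^j i^{\alpha}$. Exactly as in the first part of the proof of Corollary \ref{cor4.3}, one computes $\sum_{k=1}^{\min(r,j)}a_{j,k}=1$ for $j\le r$ and $=\Lambda_r/\Lambda_j$ for $j>r$, so the best lower-bound constant produced by \eqref{3} satisfies
\begin{equation*}
   \lambda^{p}=\min_{r\ge 1}\Bigl(1+\tfrac{1}{r}\sum_{j>r}\bigl(\Lambda_r/\Lambda_j\bigr)^{p}\Bigr)
   =1+\min_{r\ge 1}\Bigl(\tfrac{\Lambda_r^{p}}{r}\sum_{j>r}\Lambda_j^{-p}\Bigr).
\end{equation*}
The last expression in parentheses is exactly the sequence \eqref{5.2}, so everything reduces to showing that this sequence in $r$ attains its infimum at $r=1$.

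For this I would cite Theorem \ref{thm5} (when $0.14\le\alpha\le1$ and $p\ge2$) and Theorem \ref{thm6} (when $0\le\alpha\le1$ and $p\ge 8/(1+\alpha)$). Both theorems assert that the sequence \eqref{5.2} is increasing in $r$ for these parameter ranges, so the minimum above is achieved at $r=1$. Since $\Lambda_1=1$, substituting $r=1$ yields
\begin{equation*}
   \lambda^{p}=1+\sum_{j\ge 2}\Lambda_j^{-p}=\sum_{j=1}^{\infty}\Bigl(\sum_{i=1}^{j}i^{\alpha}\Bigr)^{-p},
\end{equation*}
which is precisely the right-hand constant of the claimed inequality. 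This proves \eqref{1} in the form stated.

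For the optimality of the constant, I would invoke the equality clause of Theorem \ref{thm1}: equality holds in \eqref{1} for the test vector $\mathbf{x}_s$ with entries $x_k=1$ for $k\le s$ and $x_k=0$ for $k>s$, whenever $s$ is a value of $r$ achieving the minimum in \eqref{3}. Since the minimum is attained at $r=1$, the test vector $\mathbf{x}=(1,0,0,\ldots)$ (which is non-increasing and non-negative) gives $\|A\mathbf{x}\|_p^{p}=\sum_j\Lambda_j^{-p}$ and $\|\mathbf{x}\|_p^{p}=1$, showing that no larger constant is possible.

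The main work of course lies in Theorems \ref{thm5} and \ref{thm6}, which are the substantive monotonicity statements; given those, the corollary itself is a routine packaging of Theorem \ref{thm1}, and the only step requiring any care is the bookkeeping of the two ranges of $(\alpha,p)$, both of which are handled by the same argument once the corresponding monotonicity theorem is in hand.
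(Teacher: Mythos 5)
Your proposal is correct and is essentially the paper's own argument: the paper proves this corollary by simply noting it ``follows readily from Theorem \ref{thm1}, Theorem \ref{thm5} and Theorem \ref{thm6},'' and the reduction you spell out (computing the constant in \eqref{3} for the weighted mean matrix as in Corollary \ref{cor4.3}, identifying it with the sequence \eqref{5.2}, and using the monotonicity theorems to locate the minimum at $r=1$) is exactly the intended route, including the use of the equality clause for sharpness.
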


\section{Applications of Theorem \ref{thm1} to N\"orlund matrices}
\label{sec 6} \setcounter{equation}{0}
It is asked in \cite{R&S} to determine the lower bounds for
N\"orlund matrices and motivated by this, we apply a similar idea
to that used in the proof of Theorem 4 in \cite{B} to prove the
following
\begin{lemma}
\label{lem2}
   Let ${\bf x}$ be a non-negative non-increasing sequence. Let $p \geq 1$ and let $A$ be an infinite N\"orlund matrix
   generated by $(\lambda_{j})$ with $\lambda_1 >0$. Suppose that $\Lambda_{j}/\Lambda_{j+1}$ is increasing for $j \geq
1$ and for any integer $k \geq 1, r \geq 1$,
$\Lambda_k/\Lambda_{k+1} \geq \Lambda_{kr}/\Lambda_{(k+1)r}$. Then
$||A{\bf x}||_p \geq \lambda ||{\bf x}||_p$ with the best possible
constant (provided that the infinite sum converges)
\begin{equation*}
   \lambda^p = 1+\sum^{\infty}_{j=2}(1-\frac
   {\Lambda_{j-1}}{\Lambda_j})^p.
\end{equation*}
\end{lemma}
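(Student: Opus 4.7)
The plan is to apply Theorem \ref{thm1} with $q = p$ directly to the Nörlund matrix. Because $a_{j,k} = \lambda_{j-k+1}/\Lambda_j$ for $1 \leq k \leq j$ and $0$ otherwise, the partial row sums collapse nicely: $\sum_{k=1}^{\min(r,j)} a_{j,k} = 1$ when $j \leq r$ and equals $1 - \Lambda_{j-r}/\Lambda_j$ when $j > r$. Substituting into \eqref{3} yields
\begin{equation*}
\lambda^p = \inf_{r \geq 1}\left[\,1 + \frac{1}{r}\sum_{j=r+1}^\infty \left(1 - \frac{\Lambda_{j-r}}{\Lambda_j}\right)^{\! p}\,\right].
\end{equation*}
The entire problem is thus to show this infimum is attained at $r = 1$, which is exactly what the two stated hypotheses are designed for.

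Next I would re-index the tail by writing $j = kr + i$ with $k \geq 1$ and $1 \leq i \leq r$, so that $j - r = (k-1)r + i$. The tail becomes $\sum_{k=1}^\infty \sum_{i=1}^r (1 - \Lambda_{(k-1)r+i}/\Lambda_{kr+i})^p$. Using the first hypothesis, the ratio factors telescopically as $\Lambda_{(k-1)r+i}/\Lambda_{kr+i} = \prod_{\ell=0}^{r-1}\Lambda_{(k-1)r+i+\ell}/\Lambda_{(k-1)r+i+\ell+1}$, and each factor is increasing in $i$; hence the product is increasing in $i$, so $(1 - \Lambda_{(k-1)r+i}/\Lambda_{kr+i})^p$ is decreasing in $i$. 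The inner sum over $i$ therefore dominates $r$ times its value at $i = r$, namely $r(1 - \Lambda_{kr}/\Lambda_{(k+1)r})^p$.

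Then I would invoke the second hypothesis $\Lambda_k/\Lambda_{k+1} \geq \Lambda_{kr}/\Lambda_{(k+1)r}$, equivalently $1 - \Lambda_{kr}/\Lambda_{(k+1)r} \geq 1 - \Lambda_k/\Lambda_{k+1} \geq 0$, which persists under the $p$-th power since $p \geq 1$. Summing the two successive bounds over $k \geq 1$ and relabelling $k+1 = j$ yields
\begin{equation*}
\frac{1}{r}\sum_{j=r+1}^\infty \left(1 - \frac{\Lambda_{j-r}}{\Lambda_j}\right)^{\! p}
\;\geq\; \sum_{k=1}^\infty \left(1 - \frac{\Lambda_{kr}}{\Lambda_{(k+1)r}}\right)^{\! p}
\;\geq\; \sum_{j=2}^\infty \left(1 - \frac{\Lambda_{j-1}}{\Lambda_j}\right)^{\! p}.
\end{equation*}
This shows the infimum in the formula for $\lambda^p$ is attained at $r = 1$ and gives the claimed value. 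The sharpness clause of Theorem \ref{thm1} (with the extremal vector $x_k = 1$ for $1 \leq k \leq 1$ and $x_k = 0$ otherwise) then certifies that this constant is best possible.

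The main obstacle is the indexing and pairing: one must recognise that Theorem \ref{thm1} reduces everything to a one-parameter optimisation in $r$, then regroup the tail so that the two hypotheses attack it in sequence — the first controlling the dependence on $i$ within a block, and the second comparing one term per block against the $r = 1$ expression. Once this pairing is in place the rest is bookkeeping; the only technical caveat is to assume at the outset that $\sum_{j \geq 2}(1 - \Lambda_{j-1}/\Lambda_j)^p$ converges, without which the asserted value of $\lambda$ is meaningless.
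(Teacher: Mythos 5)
Your proposal is correct and follows essentially the same route as the paper: both reduce to the one-parameter infimum via Theorem \ref{thm1}, split the tail into blocks $kr+1 \leq j \leq (k+1)r$, use the monotonicity of $\Lambda_j/\Lambda_{j+1}$ to bound each block term by the value at the block's right endpoint, and then invoke the hypothesis $\Lambda_k/\Lambda_{k+1} \geq \Lambda_{kr}/\Lambda_{(k+1)r}$ to compare with the $r=1$ case. The only cosmetic difference is that you phrase the within-block step as monotonicity in the offset $i$ while the paper bounds each term directly against $j=(k+1)r$; these are the same estimate.
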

\begin{proof}
   Theorem \ref{thm1}
implies that $||A{\bf x}||_p \geq \lambda ||{\bf x}||_p$ with
\begin{eqnarray*}
   \lambda^p &=& \inf_{r}
   r^{-1}\sum^{\infty}_{j=1}(\sum^{\min (r,j)}_{k=1}\frac {\lambda_{j-k+1}}{\Lambda_j})^p=1+\inf_{r}
   r^{-1}\sum^{\infty}_{j=r+1}(\sum^r_{k=1}\frac {\lambda_{j-k+1}}{\Lambda_j})^p \\
    &=& 1+\inf_{r}
   r^{-1}\sum^{\infty}_{j=r+1}(1-\frac {\Lambda_{j-r}}{\Lambda_j})^p= 1+\inf_{r}
   \sum^{\infty}_{k=1}a_k(r),
\end{eqnarray*}
   where
\begin{equation*}
   a_k(r)=r^{-1}\sum^{(k+1)r}_{j=kr+1}(1-\frac
   {\Lambda_{j-r}}{\Lambda_j})^p.
\end{equation*}
   It therefore remains to show that $a_k(r) \geq a_k(1)$. To show this, it suffices to show that for $k \geq 1, r \geq 1,
   kr+1 \leq j \leq (k+1)r$, we have
\begin{equation*}
   1-\frac {\Lambda_{j-r}}{\Lambda_j} \geq 1-\frac
   {\Lambda_{k}}{\Lambda_{k+1}}.
\end{equation*}
    The assumption $\Lambda_{j}/\Lambda_{j+1}$ is increasing for $j \geq
1$ implies that
\begin{equation*}
   1-\frac {\Lambda_{j-r}}{\Lambda_j} \geq 1-\frac
   {\Lambda_{(k+1)r-r}}{\Lambda_{(k+1)r}}.
\end{equation*}
   This combines with the other assumption implies the
   assertion of the lemma.
\end{proof}

   If we take $\Lambda_j=j^{\alpha}, \alpha>0$ in Lemma
   \ref{lem2}, then the assumptions there are easily verified and we
   thus have
\begin{cor}
\label{cor4}
  Let ${\bf x}$ be a non-negative non-increasing sequence, $p > 1$,
  $\alpha >0$, then
\begin{equation*}
  \sum^{\infty}_{j=1}\Big(\sum^j_{k=1}\frac {(j-k+1)^{\alpha}-(j-k)^{\alpha}}{j^{\alpha}}x_k \Big )^p
  \geq \sum^{\infty}_{j=1}\Big(\frac {j^{\alpha}-(j-1)^{\alpha}}{j^{\alpha}} \Big )^p ||{\bf
  x}||^p_p.
\end{equation*}
   The constant is best possible.
\end{cor}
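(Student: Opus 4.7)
The plan is to apply Lemma \ref{lem2} directly with the choice $\Lambda_j = j^{\alpha}$, which corresponds to the N\"orlund matrix generated by $\lambda_j = j^{\alpha} - (j-1)^{\alpha}$ (so that $\lambda_1 = 1 > 0$). The only work is to verify the two monotonicity hypotheses of that lemma and to identify the resulting constant with the one appearing in the statement.

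For the first hypothesis, note that
\begin{equation*}
   \frac{\Lambda_j}{\Lambda_{j+1}} = \Big(\frac{j}{j+1}\Big)^{\alpha} = \Big(1 - \frac{1}{j+1}\Big)^{\alpha},
\end{equation*}
which is manifestly increasing in $j$ since $1 - 1/(j+1)$ is increasing and $\alpha > 0$. For the second hypothesis, a direct calculation gives
\begin{equation*}
   \frac{\Lambda_{kr}}{\Lambda_{(k+1)r}} = \Big(\frac{kr}{(k+1)r}\Big)^{\alpha} = \Big(\frac{k}{k+1}\Big)^{\alpha} = \frac{\Lambda_k}{\Lambda_{k+1}},
\end{equation*}
so the required inequality actually holds with equality for every $k, r \geq 1$.

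With both hypotheses verified, Lemma \ref{lem2} yields $\|A{\bf x}\|_p \geq \lambda \|{\bf x}\|_p$ for every non-negative non-increasing ${\bf x}$, where the best constant satisfies
\begin{equation*}
   \lambda^p = 1 + \sum_{j=2}^{\infty}\Big(1 - \frac{\Lambda_{j-1}}{\Lambda_j}\Big)^p = \sum_{j=1}^{\infty}\Big(\frac{j^{\alpha} - (j-1)^{\alpha}}{j^{\alpha}}\Big)^p,
\end{equation*}
after absorbing the leading $1$ into the $j=1$ term (where $\Lambda_0 = 0$). This is exactly the constant claimed in the corollary. I do not anticipate any serious obstacle: the entire argument reduces to the two short checks above, since Lemma \ref{lem2} has already done the heavy lifting by applying Theorem \ref{thm1} and packaging the infimum in \eqref{3} into a clean sum.
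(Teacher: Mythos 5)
Your proposal is correct and follows exactly the paper's route: the paper likewise derives Corollary \ref{cor4} by taking $\Lambda_j=j^{\alpha}$ in Lemma \ref{lem2} and noting that "the assumptions there are easily verified," which are precisely the two checks you carry out (and your identification of the constant, absorbing the leading $1$ as the $j=1$ term, is accurate). Nothing further is needed.
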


   We remark here that when the assumptions of Lemma \ref{lem2}
   are satisfied by some sequence $(\Lambda_n)$, then the same
   assumptions are also satisfied by the sequence
   $(\sum^n_{i=1}\Lambda_i)$. To see this, we let $\Lambda'_n=\sum^n_{i=1}\Lambda_n$ and
   note that the fact $\Lambda'_{n}/\Lambda'_{n+1}$ is increasing
   follows from \cite[Lemma 3.1]{G7}.
   To show $\Lambda'_k/\Lambda'_{k+1} \geq
   \Lambda'_{kr}/\Lambda'_{(k+1)r}$, we apply \cite[Lemma 3.1]{G7}
   again to see that it suffices to show for $r \geq 1, n \geq 0$,
\begin{equation*}
   \frac {\Lambda_{n+1}}{\Lambda_{n+2}} \geq \frac
   {\sum^{r(n+1)}_{i=rn+1}\Lambda_i}{\sum^{r(n+2)}_{i=r(n+1)+1}\Lambda_i}.
\end{equation*}
   The above inequality holds since by our assumptions for $(\Lambda_n)$, we have for $1 \leq i \leq r$,
   $\Lambda_{rn+i}/\Lambda_{r(n+1)+i}$ $ \leq \Lambda_{rn+r}/\Lambda_{r(n+1)+r} \leq \Lambda_{n+1}/\Lambda_{n+2}$.

   We now take $\Lambda'_j=\sum^j_{i=1}i^{\alpha}, \alpha \geq 0$ so that by our remark above and Corollary \ref{cor4}, we have
\begin{cor}
\label{cor5}
  Let ${\bf x}$ be a non-negative non-increasing sequence, $p > 1$,
  $\alpha \geq 0$, then
\begin{equation*}
  \sum^{\infty}_{j=1}\Big(\sum^j_{k=1}\frac {(j-k+1)^{\alpha}}{\sum^j_{i=1}i^{\alpha}}x_k \Big )^p
  \geq \sum^{\infty}_{j=1}\Big(\frac {j^{\alpha}}{\sum^j_{i=1}i^{\alpha}} \Big )^p ||{\bf
  x}||^p_p.
\end{equation*}
   The constant is best possible.
\end{cor}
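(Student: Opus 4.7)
The plan is to apply Lemma \ref{lem2} directly to the N\"orlund matrix generated by $\lambda_n = n^{\alpha}$. With this choice, $\Lambda_j := \sum_{i=1}^j \lambda_i = \sum_{i=1}^j i^{\alpha}$, and the matrix entries $\lambda_{j-k+1}/\Lambda_j$ for $k \leq j$ become precisely $(j-k+1)^{\alpha}/\sum_{i=1}^j i^{\alpha}$, so the left-hand side of the claim is exactly $\|A\mathbf{x}\|_p^p$. The constant supplied by Lemma \ref{lem2} is
\begin{equation*}
  1 + \sum_{j=2}^{\infty}\Bigl(1 - \frac{\Lambda_{j-1}}{\Lambda_j}\Bigr)^p = \sum_{j=1}^{\infty}\Bigl(\frac{j^{\alpha}}{\sum_{i=1}^j i^{\alpha}}\Bigr)^p,
\end{equation*}
since $1 - \Lambda_{j-1}/\Lambda_j = j^{\alpha}/\Lambda_j$ and the $j=1$ term on the right equals $1$. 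This coincides with the constant in the statement.

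What remains is to verify the two hypotheses of Lemma \ref{lem2} for $\Lambda_n = \sum_{i=1}^n i^{\alpha}$: namely, that $\Lambda_n/\Lambda_{n+1}$ is nondecreasing in $n$, and that $\Lambda_k/\Lambda_{k+1} \geq \Lambda_{kr}/\Lambda_{(k+1)r}$ for all $k, r \geq 1$. The route I would take is to invoke the remark following Corollary \ref{cor4}, which states that whenever the hypotheses of Lemma \ref{lem2} are satisfied by a sequence $(\Lambda_n)$ they are also satisfied by its partial-sum sequence $(\sum_{i=1}^n \Lambda_i)$. It therefore suffices to check the hypotheses for $\Lambda_n = n^{\alpha}$: the first reduces to $(n/(n+1))^{\alpha}$ being nondecreasing in $n$, which is clear for $\alpha \geq 0$; the second becomes the identity $(k/(k+1))^{\alpha} = (kr/((k+1)r))^{\alpha}$. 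These are exactly the verifications already used to deduce Corollary \ref{cor4}.

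I do not anticipate a substantive obstacle. The inequality and the sharpness of the constant are both furnished by Lemma \ref{lem2} as soon as the hypotheses are in place, and the one conceptual step -- lifting the hypotheses from $\Lambda_n = n^{\alpha}$ to $\Lambda_n = \sum_{i=1}^n i^{\alpha}$ -- is already packaged in the remark. The only technicality worth noting is convergence of the series defining the constant, which is immediate because $j^{\alpha}/\sum_{i=1}^j i^{\alpha} = O(1/j)$ when $\alpha \geq 0$ and $p > 1$.
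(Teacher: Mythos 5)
Your proposal is correct and follows the paper's own route: apply Lemma \ref{lem2} to the N\"orlund matrix generated by $\lambda_n=n^{\alpha}$ (so $\Lambda_j=\sum_{i=1}^j i^{\alpha}$), verifying the two hypotheses by checking them for $n^{\alpha}$ and lifting them to the partial-sum sequence via the remark after Corollary \ref{cor4}. The computation of the constant and the sharpness claim also match the paper's argument.
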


\vskip0.1in
\noindent {\bf Acknowledgements.}
  The author is supported by a research fellowship from an Academic
Research Fund Tier 1 grant at Nanyang Technological University for
this work.

\end{document}